\def\gg{{\mathfrak{g}}}
\def\hh{{\mathfrak{h}}}
\def\CC{{\mathbb C}}
\def\NN{{\mathbb N}}
\def\ZZ{{\mathbb Z}}
\def\lr{\:\raisebox{-0.6ex}{$\stackrel{p_{\xi^k}}{\to}$}\:}
\def\gtimes{{\gg\otimes A}}
\def\gof{{\gg_{\rm aff}\otimes A}}
\def\gf{{\gg_{\rm aff}}}
\def\oG{{\overline{\Gamma}}}
\newtheorem{dfn}{Definition}[section]
\newtheorem{definition}[dfn]{Definition}
\newtheorem{theorem}[dfn]{Theorem}
\newtheorem{corollary}[dfn]{Corollary}
\newtheorem{lemma}[dfn]{Lemma}
\newtheorem{proposition}[dfn]{Proposition}
\newtheorem{remark}{Remark}
\begin{document}

\author{S. Eswara Rao  }
\author{Vyacheslav Futorny }
\title[Loop Kac-Moody  Algebras
\  ]{Representations of Loop Kac-Moody Lie Algebras }
\address{School of Mathematics,
 Tata Institute of Fundamental Research,
 Homi Bhabha Road, Colaba, Mumbai, 400 005, India} \email{senapati@math.tifr.res.in
 }
\address{Instituto de Matem\'atica e Estat\'\i stica,
Universidade de S\~ao Paulo, Caixa Postal 66281, S\~ao Paulo, CEP
05315-970, Brasil} \email{futorny@ime.usp.br}

\subjclass[2000]{Primary:  17B65}

\begin{abstract}

We study representations of the Loop Kac-Moody Lie algebra $\gg
\otimes A$, where $\gg$ is any Kac-Moody  algebra and $A$ is a
ring of Laurent polynomials  in $n$ commuting variables. In
particular, we study representations with finite dimensional
weight spaces and their graded versions.
When we specialize $\gg$ to be a finite dimensional or affine Lie algebra
we obtain modules for toroidal Lie algebras.\\
MSC : Primary 17B67, Secondary 17B65\\
Keywords : Loop Kac-Moody Lie algebras, Representations theory, Automorphisms.
\end{abstract}

\maketitle

\section*{Introduction}
Let $\gg_0$ be a simple finite dimensional Lie algebra and $A$
  the ring of Laurent polynomials in $n$ commuting variables.
 The paper  grew  out of our interest to construct new modules
for toroidal
 Lie algebras, which are the universal central extensions of
$\gg_0 \otimes A$.  In particular, $\gg_0\otimes A$  is a quotient
of the corresponding toroidal Lie algebra and, hence, any module
of $\gg_0\otimes A$ lifts to a module of this toroidal Lie
algebra. Let $\gf$ be the the affinization of $\gg_0$. Then $\gof$
is a quotient of a toroidal Lie algebra with $n+1$ variables (see
\cite{E2}, Example 4.2). Thus, by constructing modules for
$\gg_0\otimes A$ and $\gof$, we obtain modules for corresponding
toroidal Lie algebras. In fact, it is shown in \cite{E3} that any
irreducible module over a toroidal Lie algebra with finite
dimensional weight spaces comes from some module over
$\gg_0\otimes A$ or $\gof$.

Let $\gg$ be any Kac-Moody Lie algebra, $\hh\subseteq \gg$ a Cartan
subalgebra. In this paper we study highest weight modules for
$\gtimes$ for any Kac- Moody Lie algebra $\gg$ as our methods work
in such generality. The main idea comes from the paper of Wilson
\cite{W}, where the author studies  the case of $\gg = s\ell_2$
and $A=\mathbb C[t,t^{-1}]$. We generalize most of the results of
\cite{W} for any Kac-Moody Lie algebra and for any number of
variables.

  Set $\gg' = [\gg, \gg]$ and $\hh'=\hh\cap \gg'$. Let $\gg' = N^+ \oplus
\hh'\oplus N^{-}$  be the standard decomposition.  Then
$\gg'\otimes A = (N^+ \otimes A) \oplus (\hh' \otimes A) \oplus
(N^- \otimes A)$ is a natural triangular decomposition. We study
irreducible modules $V$ for $\gg'\otimes A$ such  that $V$ admits
a nonzero element $v$ such that $(N^+ \otimes A) v = 0$ and $\hh'
\otimes A$ acts on $v$ via some function $\psi\in(\hh'\otimes
A)^*$.
  We will denote the corresponding module by $V(\psi)$. Note that $V(\psi)$ is a weight
  module, that is
$$V(\psi)=\oplus_{\mu\in (\hh')^{*}} V(\psi)_{\mu}.$$ Module
$V(\psi)$ will be called a \emph{highest weight} module with a
highest weight $\psi$ (though the construction is similar to the
construction of loop modules in the affine case).

   We are mainly
interested in  modules $V(\psi)$ having finite dimensional weight
spaces $V(\psi)_{\mu}$ for all $\mu\in (\hh')^{*}$. Necessary and
sufficient conditions for a module $V(\psi)$ to have all finite
dimensional weight spaces were given in \cite{E2}:
 there must exist a co-finite ideal $I$ of $A$ such that $$(\gg' \otimes
 I)V(\psi) = 0.$$ Billig and Zhao  defined exp-polynomial maps
(see Definition 3.1) and showed that $V(\psi)$ has finite
dimensional weight spaces if $\psi$ is an exp-polynomial map
\cite{BZ}. Rencai Lu and Zhao  proved that this condition is
necessary \cite{RZ}. They also gave an explicit formula for the
exp-polynomial map in terms of the co-finite ideal $I$.  This
explicit form  is very useful for us in the study of $V(\psi)$.

Note that $\gg'\otimes A$ is naturally $\ZZ^n$-graded with a
gradation coming from $A$.  Hence, it is natural and important to
study $\ZZ^n$-graded modules for $\gg' \otimes A$. Though module
$V(\psi)$ is not $\ZZ^n$-graded, we can define a natural
$\gg'\otimes A$-module structure on $V(\psi)\otimes A$, which is
already $\ZZ^n$-graded. Even though $V(\psi)$ is irreducible, the
module $V(\psi) \otimes A$ need not be irreducible.  It was shown
in \cite{E2} that $V(\psi)\otimes A$ is completely reducible with
finitely many irreducible components. The interplay between
modules $V(\psi)$ and irreducible components of $V(\psi)\otimes A$
is the main content of this paper.

Given any function $\psi \in (\hh' \otimes A)^*$ one can  define a
natural $\ZZ^n$- graded map $\overline{\psi}$ from $\hh' \otimes A$ to $A$ and  a
$\ZZ^n$-graded irreducible module $V(\overline{\psi})$. This
module is an irreducible component of $V(\psi)\otimes A$
(\cite{E2}). Moreover,  all irreducible components are graded
isomorphic up to a grade shift.

  Suppose $V(\psi)\otimes A$ decomposes into $R$ isomorphic copies. Then
the map $\psi$ has an interesting decomposition takes. $\psi$ can be decomposed into a sum of $R$
exp-polynomial maps (see Proposition 3.4). In addition,  each of
these exp-polynomial maps gives rise to an irreducible highest
weight module. Further $V(\psi)$ is isomorphic to a tensor product
of these modules (see Corollary 3.9). Moreover, all these
components of the tensor product are isomorphic as $\gg'\otimes
A$-modules up to an automorphism of $\gg'\otimes A$
(see Section 5). On the other hand, the restriction of any of
these automorphisms to $\gg'$ is identity. Hence, all components
of the decomposition of $V(\psi)$ into the tensor product are
isomorphic as $\gg'$-modules. In particular, they are isomorphic
as $\hh'$-modules. So the character of $V(\psi)$ is the
product of characters of one of these components (see
Proposition 5.1).

We  recall an abstract decomposition of $V(\psi)\otimes A$ from an
earlier work of the first author in Section 1.  One of our main
results in Theorem 4.2 where we give an explicit description of
the components of this decomposition in terms of automorphisms
discussed earlier. This  allows us to compute the characters of
the components of $V(\psi)\otimes A$ in terms of the character of
$V(\psi)$.  In Section 5 we discuss in detail  the case $n=2$.

We also note that Theorem~3.4 of \cite{PB} can be easily deduced
from our Proposition~3.4.  The polynomials $P_{h, \lambda}$ are in
fact constant in this case.

\section{Preliminaries}
All vector spaces,  algebras and tensor products
are over complex numbers $\CC$.  Let $\ZZ, \NN$ and $\ZZ_+$ denote
integers, non-negative integers and positive integers,
respectively. For any Lie algebra ${\mathcal A}$ denote
$U({\mathcal A})$ the universal enveloping algebra of ${\mathcal
A}$.

Recall that $\gg$ is an arbitrary Kac-Moody Lie-algebra (see
\cite{K} for details) and  $A = \CC[t_1^{\pm 1}, \cdots, t_n^{\pm
1}]$ is the ring of Laurent polynomials  in $n$ commuting
variables. Write $\hh = \hh' \oplus  \hh^{''}$ for some subspace
$\hh^{''}$ of $\hh$. 

Set

$$\begin{array}{lll}
\gg_{A} & = & \gg^{'} \otimes A \oplus \hh^{''} \supseteq \gg \\
\hh_{A} & = &\hh^{'} \otimes A \oplus \hh^{''} \supseteq \hh.
\end{array}
$$

For $m = (m_1, \cdots, m_n) \in \ZZ^n$ denote $t^m = t_1^{m_1}
\cdots, t^{m_n}_n \in A$.  The Lie bracket in $\gg_A$ is defined as
follows:
$$
\begin{array}{lll}
[X \otimes t^m, Y \otimes t^s ] & = & [X,Y] \otimes t^{m+s}\\[1mm]
[h, X \otimes t^m] & = & [h, X] \otimes t^m \\[1mm]
[h, h'] & = & 0 \ {\rm for}
\end{array}
$$

$X,Y \in\gg', h, h'  \in \hh'', m, s \in  \ZZ^n$.  The Lie algebra
$\gg_A$ is called \emph{Loop Kac-Moody Lie algebra}.

Let $D$ be $\CC$-linear span of derivations $d_1, \cdots, ,d_n$
where $[d_i, \gg' \otimes t^m] = m_i \gg' \otimes t^m$ and $[d_i,
d_j] = [d_i, \hh''] = 0$ for all $i,j=1, \ldots, n$.

Denote $\widetilde{\gg_A} = \gg_A\oplus D,\widetilde{\hh_A} =
\hh_A \oplus D$.  Let $\gg = N^- \oplus \hh \oplus N^+$ be the
 standard triangular decomposition. Then $\gg_A = (N^- \otimes A) \oplus \hh_A
\oplus (N^+ \otimes A)$ is a triangular decomposition of $\gg_A$.

Let $\psi : \hh_A \to \CC$ be any linear map.  Define the induced
map  $\overline{\psi} : \hh_A \to \CC$ as follows:
$$\overline{\psi} (h \otimes t^m) = \psi (h \otimes t^m) t^m$$ for
$h \in \hh', m \in \ZZ^n,$ and $\overline{\psi} (\hh'') = \psi
(\hh'')$. Let $\CC$ be the one dimensional representation of $(N^+
\otimes A) \oplus \hh_A$ where $N^+ \otimes A$ acts trivially and
$\hh_A$ acts via $\psi$: $h\cdot 1=\psi(h)1$ for any $h\in \hh_A$.
Consider the induced module for $\gg_A$,
$$M(\psi) = U(\gg_A){\displaystyle{\bigotimes_{(N^+ \otimes A) \oplus \hh_A}}}\CC.$$
By standard arguments one can show that $M(\psi)$ has a unique
irreducible quotient denoted by $V(\psi)$.

Now we will define a graded version of this module.  We  consider
$A$ as a $\widetilde{\hh}_A$-module with respect to the following
action:
$$\begin{array}{rll}
(h \otimes t^m)\cdot t^k & = & \overline{\psi}(h \otimes t^m) t^k \\
h'\cdot t^m & = & \overline{\psi}(h')t^m \\
d_i\cdot t^m & = & m_i t^m.
\end{array}
$$
for $h \in \hh'$, $h' \in \hh''$, $d_i \in D$, $i=1, \ldots, n$,
$m, k \in \mathbb Z^n$. Extend $\overline{\psi}$ to $U(\hh'
\otimes A)$  by an algebra homomorphism.  Denote the image
$\overline{\psi}(U(\hh' \otimes A))$ by $A_{\psi}$, which is also
a $\widetilde{\hh}_A$-module.

We have the following criteria of irreducibility.

\begin{lemma}\label{(1.2)-Lemma} [\cite{E1}, Lemma 1.2]
The $\widetilde{\hh}_A$-module $A_{\psi}$ is an irreducible  if
and only if each homogeneous element of $A_{\psi}$ is invertible
in $A_{\psi}$.

\end{lemma}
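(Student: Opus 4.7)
The plan is to identify the $\widetilde{\hh}_A$-action on $A_\psi$ concretely and then reduce the irreducibility question to a question about generation of principal ideals in the graded subalgebra $A_\psi \subseteq A$.

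First I would note that because $\overline{\psi}(h\otimes t^m) = \psi(h\otimes t^m)\,t^m$ is homogeneous of degree $m\in\ZZ^n$, the subalgebra $A_\psi = \overline{\psi}(U(\hh'\otimes A))$ is a $\ZZ^n$-graded commutative subalgebra of $A$, and $1\in A_\psi$. The defining formulas then tell us exactly how the pieces of $\widetilde{\hh}_A$ act on $A_\psi$: each $h\otimes t^m\in \hh'\otimes A$ acts as multiplication by the element $\overline{\psi}(h\otimes t^m)\in A_\psi$, each $h'\in\hh''$ acts as the scalar $\psi(h')$, and each $d_i\in D$ acts diagonally with eigenvalue $k_i$ on the degree-$k$ component. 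Since $\hh_A$ is abelian and the bracket of $D$ with $\hh_A$ lies in $\hh_A$, the PBW decomposition $U(\widetilde{\hh}_A) = U(\hh'\otimes A)\, U(\hh'')\, U(D)$ shows that, applied to a homogeneous element $a\in A_\psi$, the universal enveloping algebra acts by $U(\widetilde{\hh}_A)\cdot a = A_\psi\cdot a$.

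For the forward implication, suppose $A_\psi$ is irreducible and let $a\in A_\psi$ be a nonzero homogeneous element. By the observation above, the submodule generated by $a$ equals the principal ideal $A_\psi\cdot a$, so irreducibility forces $A_\psi\cdot a = A_\psi$. In particular $1\in A_\psi\cdot a$, which exhibits an inverse of $a$ in $A_\psi$.

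For the converse, suppose every nonzero homogeneous element of $A_\psi$ is invertible and let $M\subseteq A_\psi$ be a nonzero $\widetilde{\hh}_A$-submodule. The main thing to verify is that $M$ contains a nonzero homogeneous element; once this is done, the argument of the previous paragraph gives $A_\psi\cdot a\subseteq M$, and invertibility of $a$ forces $M = A_\psi$. To extract a homogeneous element, pick nonzero $a\in M$ and write $a = \sum_{k\in S} a_k$ as a finite sum of its homogeneous components, indexed by a finite set $S\subseteq\ZZ^n$. Because the monomials $d_1^{j_1}\cdots d_n^{j_n}$ act on $a$ by
\[
(d_1^{j_1}\cdots d_n^{j_n})\cdot a \;=\; \sum_{k\in S} k_1^{j_1}\cdots k_n^{j_n}\, a_k,
\]
a standard multivariate Vandermonde argument, applied to the distinct vectors $k\in S$, produces linear combinations of these elements equal to each individual $a_k$. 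Thus every $a_k$ lies in $M$, so $M$ contains a nonzero homogeneous element, completing the proof.

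The only step that is not entirely routine is the extraction of a homogeneous element from an arbitrary element of a submodule, and that is handled by the Vandermonde observation above; everything else is a direct unwinding of the definitions of $\overline{\psi}$ and of the action of $\widetilde{\hh}_A$ on $A$.
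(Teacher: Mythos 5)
Your proof is correct. The paper itself offers no argument for this lemma---it is imported verbatim from \cite{E1}, Lemma 1.2---but your reasoning is exactly the standard one: $h\otimes t^m$ acts on $A_\psi$ as multiplication by $\overline{\psi}(h\otimes t^m)\in A_\psi$, so the submodule generated by a nonzero homogeneous $a$ is the principal ideal $A_\psi\,a$, and the $D$-action together with Lagrange interpolation on the finite set of degrees extracts homogeneous components from any element of a submodule. The only (cosmetic) point is that ``each homogeneous element'' must of course be read as ``each nonzero homogeneous element.''
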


Throughout this paper we assume that $A_{\psi}$ is an irreducible
$\widetilde{\hh}_A$-module. Define
$$Supp \, \overline{\psi}= \{ m \in \ZZ^n \mid (A_{\psi})_m
\not= 0 \}.$$

Clearly,  $Supp \, \overline{\psi}$ is a subgroup of $\ZZ^n$.

Suppose now that $N^+ \otimes A$ acts trivially on $A_{\psi}$.
Consider the induced module for $\widetilde{\gg}_A$,
$$M(\overline{\psi}) = U(\widetilde{\gg}_A)
{\displaystyle{\bigotimes_{(N^+ \otimes A) \oplus
\widetilde{\hh}_A}}}A_{\psi}.$$

Since $A_{\psi}$ is an irreducible module, it follows that
$M(\overline{\psi})$ has a unique irreducible quotient
$V(\overline{\psi})$.  It is standard that $M(\psi)$ and
$M(\overline{\psi})$ are weight module with respect to $\hh$ and
$\hh \oplus D$, respectively, that is
$$M(\psi)=\oplus_{\mu\in \hh^*}M(\psi)_{\mu}$$

and
$$M(\overline{\psi})=\oplus_{\mu\in (\hh \oplus D)^*}M(\overline{\psi})_{\mu}$$
(We refer to \cite{E1}, Section 3  for detail). Since any quotient
of a weight module is a weight module, $V(\psi)$ and
$V(\overline{\psi})$ are irreducible weight modules for $\gg_A$
and $\widetilde{\gg}_A$, respectively.

  Our
first goal is to establish a relationship between
$V(\overline{\psi})$ and $V(\psi)$.  We  define now $\widetilde\gg_A$
 module structure on $V(\psi) \otimes A$ as follows:

$$\begin{array}{rll}
(X \otimes t^m)\cdot (v \otimes t^k) & = & ((X \otimes t^m)\cdot
v)
\otimes t^{m +k} \\
h\cdot (v  \otimes t^k) & = & (h v) \otimes t^k \\
d_i\cdot (v \otimes t^k) & = & k_i (v \otimes t^k)
\end{array}
$$
for $ v \in V(\psi)$, $X \in \gg'$, $h \in \hh''$, $d_i \in D$,
$i=1, \ldots, n$, $m, k \in \ZZ^n$. We have the following

\begin{proposition}\label{(1.2)-Proposition} [\cite{E2},Proposition~3.5] Let $G \subset
\ZZ^n$ be such that $\{ t^m, m \in G\}$ is a set of coset
representatives for $A/A_{\psi}$.  Let $v$ be a highest weight
vector in $V(\psi)$. Set $v(m) = v \otimes t^m$ for any $m\in
\ZZ^{n}$. Then
\begin{enumerate}
\item $V (\psi) \otimes A = {\displaystyle{\bigoplus_{m \in G}}} U
v(m)$ where $Uv(m)$ is the $\widetilde{\gg}_A$-submodule
generated by $v(m)$. \item Each $U v(m)$ is an irreducible
$\widetilde{\gg}_A$-module. \item $U v(0) \cong
V(\overline{\psi})$ as $\widetilde{\gg}_A$-modules. \item All
components in (1) are isomorphic as $\widetilde{\gg}_A$-modules
up to a grade shift, that is the $D$ action is shifted by a vector
in $\CC^n$.
\end{enumerate}

\end{proposition}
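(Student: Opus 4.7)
The plan is to analyze $V(\psi) \otimes A$ through its ``top layer'' $v \otimes A$. Under the identification $v \otimes t^k \leftrightarrow t^k$, this subspace becomes $\widetilde{\hh}_A$-isomorphic to $A$ with the module structure defined before Lemma~\ref{(1.2)-Lemma}, and decomposes as $v \otimes A = \bigoplus_{m \in G} v \otimes A_{\psi} t^m$. Each summand $v \otimes A_{\psi} t^m$ is isomorphic to $A_{\psi}$ up to a shift of $D$-eigenvalue by $m$, hence irreducible as a $\widetilde{\hh}_A$-module by Lemma~\ref{(1.2)-Lemma}. Moreover, $v \otimes A$ is annihilated by $N^+ \otimes A$ (since $v$ is), so each $v(m)$ behaves as a highest-weight vector for $\widetilde{\gg}_A$.

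For part (1), I would first show $V(\psi) \otimes A = U(\gg_A) \cdot (v \otimes A)$ by induction on PBW length: if $u = (X \otimes t^s) u'$, then $u v \otimes t^k = (X \otimes t^s) \cdot (u' v \otimes t^{k-s})$, reducing to the inductive hypothesis. Since $v \otimes A = \bigoplus_{m \in G} U(\widetilde{\hh}_A) v(m)$, this yields $V(\psi) \otimes A = \sum_{m \in G} Uv(m)$. For part (2), a PBW argument shows $Uv(m) = U(N^- \otimes A) \cdot (v \otimes A_{\psi} t^m)$, so its $\hh$-highest-weight subspace is exactly $v \otimes A_{\psi} t^m$. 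For a nonzero $\widetilde{\gg}_A$-submodule $W \subseteq Uv(m)$, boundedness of the $\hh$-weights by $\mathrm{wt}(v)$ and finiteness of weights in any positive-root interval ensure that repeated action of $N^+ \otimes A$ on a nonzero element of $W$ eventually forces $W \cap (v \otimes A_{\psi} t^m) \neq 0$; irreducibility of this top-layer piece under $\widetilde{\hh}_A$ then gives $W = Uv(m)$.

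Directness of the sum in (1) follows: if $Uv(m) \cap \sum_{m' \neq m} Uv(m') \neq 0$, irreducibility forces $Uv(m) \subseteq \sum_{m' \neq m} Uv(m')$, contradicting the direct-sum decomposition of $v \otimes A$ at the top-layer level. For (3), $v(0)$ is annihilated by $N^+ \otimes A$ and generates $v \otimes A_{\psi} \cong A_{\psi}$ under $U(\widetilde{\hh}_A)$, producing by the universal property a surjection $M(\overline{\psi}) \twoheadrightarrow Uv(0)$; since $Uv(0)$ is irreducible, it must equal the unique irreducible quotient $V(\overline{\psi})$. For (4), the identical construction applied at $v(m)$ yields $Uv(m) \cong V(\overline{\psi})$ once the $D$-weight of the highest-weight subspace is shifted from $m$ back to $0$, which is precisely the claimed grade shift.

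The main obstacle is the chain-termination step in part (2): one must verify that iterated action of $N^+ \otimes A$ on a nonzero element of $W$ eventually lands in $v \otimes A$ rather than escaping indefinitely. This rests on the facts that every $\hh$-weight of $V(\psi)$ has the form $\mathrm{wt}(v) - \sum n_i \alpha_i$ with $n_i \in \NN$ and that tensoring with $A$ preserves the $\hh$-weight structure, so only finitely many weights lie between any given weight and $\mathrm{wt}(v)$. Once this is in hand, the remainder is a routine invocation of PBW and the universal property of the induced modules $M(\psi)$ and $M(\overline{\psi})$.
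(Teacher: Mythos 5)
The paper offers no proof of this proposition --- it is imported verbatim from \cite{E2}, Proposition~3.5 --- so there is nothing internal to compare against. Your strategy is the standard (and, I believe, essentially the one used in \cite{E2}) top-layer analysis: identify $v\otimes A$ with the $\widetilde{\hh}_A$-module $A=\bigoplus_{m\in G}A_{\psi}t^m$, generate everything from it via PBW, and reduce irreducibility of each $Uv(m)$ to irreducibility of its top layer $v\otimes A_{\psi}t^m$ (Lemma~\ref{(1.2)-Lemma}). Parts (1), (3), (4) and the directness of the sum are fine as sketched.

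There is, however, a genuine gap in part (2), located exactly at the step you flag but concerning a different difficulty than the one you address. Finiteness of the set of weights between $\mathrm{wt}(w)$ and $\mathrm{wt}(v)$ only guarantees that iterated application of $N^+\otimes A$ to a nonzero $w\in W$ \emph{terminates}: you obtain a nonzero weight vector $w'\in W$ with $(N^+\otimes A)w'=0$. It does not guarantee that $w'$ lies in $v\otimes A_{\psi}t^m$; a priori the process could stall at a singular vector of weight $\mathrm{wt}(v)-\beta$ with $\beta\neq 0$, which would generate a proper submodule, and nothing in your weight-counting rules this out. To close the gap, write $w'=\sum_k u_k\otimes t^k$ with $u_k\in V(\psi)$, and observe that $(X\otimes t^s)w'=\sum_k\bigl((X\otimes t^s)u_k\bigr)\otimes t^{k+s}=0$ forces $(X\otimes t^s)u_k=0$ for every $k$ and every $X\otimes t^s\in N^+\otimes A$, since the homogeneous components $t^{k+s}$ stay distinct. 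So each $u_k$ is an $N^+\otimes A$-singular vector of the \emph{irreducible} module $V(\psi)$; as $U(\gg_A)u_k=U(N^-\otimes A)U(\hh_A)u_k=V(\psi)$ must contain $v$, the weight of $u_k$ equals $\mathrm{wt}(v)$, and $V(\psi)_{\mathrm{wt}(v)}=\CC v$, whence $u_k\in\CC v$. Therefore $w'\in(v\otimes A)\cap Uv(m)=v\otimes A_{\psi}t^m$, and irreducibility of this top layer as an $\widetilde{\hh}_A$-module completes the argument exactly as you describe. With this insertion your proof is correct.
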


We now recall some standard results on $M(\psi)$ and
$M(\overline{\psi})$.  We first observe that $\widetilde{\gg}_A$
is a Pre- exp-polynomial algebra in the sense of [\cite{BGLZ},
Example 1].

\begin{lemma}\label{lemmas 1.4,1.5,1.6,1.7}
\begin{enumerate}
\item \label{(1.4)-Lemma} [\cite{BGLZ},Theorem 2.12] The
$\widetilde{\gg}_A$-module $M(\overline{\psi})$ is  irreducible if
and only if $M(\psi)$ is an irreducible $\gg_A$-module. 
\item \label{(1.5)-Lemma} [\cite{E2},Lemma 3.6] The $\widetilde{\gg}_A$ module
$V(\overline{\psi})$ has finite dimensional weight spaces with
respect to $\hh \oplus D$ if and only if $\gg_A$ module $V(\psi)$ has finite
dimensional weight spaces with respect to $\hh$. 
\item \label{(1.6)-Lemma} [\cite{E2},Lemma 3.7] The $\gg_A$ module $V(\psi)$ has
finite dimensional weight spaces with respect to $\hh$ if and only
if $\psi$ factors through $\hh' \otimes A/I$ for some co-finite
ideal $I$ of $A$. \end{enumerate}
\end{lemma}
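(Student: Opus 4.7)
The plan is to handle all three parts by systematically exploiting the relationship between $V(\psi)$ and $V(\overline{\psi})$ developed in Proposition~\ref{(1.2)-Proposition}, together with an analogous picture for the Verma-type modules.

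First, for part~(1), I would compare $M(\psi)$ and $M(\overline{\psi})$ via a PBW identification. By the triangular decomposition each is a free $U(N^-\otimes A)$-module, and as vector spaces one has $M(\overline{\psi})\cong U(N^-\otimes A)\otimes A_{\psi}$ while $M(\psi)\cong U(N^-\otimes A)$. Proper submodules are detected by singular vectors (elements killed by $N^+\otimes A$) of weight different from the highest one. A proper singular vector $u\in M(\psi)$ tensored with $1$ gives a proper singular vector in $M(\overline{\psi})$. Conversely, a homogeneous proper singular vector in $M(\overline{\psi})$ has the form $u\otimes a$ with $a$ a homogeneous element of $A_{\psi}$; since every such element is invertible by Lemma~\ref{(1.2)-Lemma}, one recovers a singular vector $u\in M(\psi)$. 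This is exactly the pre-exp-polynomial mechanism of Billig--Zhao.

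For part~(2), I would use Proposition~\ref{(1.2)-Proposition} directly. The $(\hh\oplus D)$-weight decomposition of $V(\psi)\otimes A$ is
$$(V(\psi)\otimes A)_{(\mu,k)} \;=\; V(\psi)_{\mu}\otimes \CC t^{k},$$
whose dimension equals $\dim V(\psi)_{\mu}$. Since $V(\psi)\otimes A$ decomposes as a finite direct sum of grade shifts of $V(\overline{\psi})$, the weight spaces of $V(\overline{\psi})$ are finite-dimensional if and only if those of $V(\psi)\otimes A$ are, which in turn is equivalent to those of $V(\psi)$ being finite-dimensional.

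For part~(3), sufficiency is the easier direction: if $\psi$ factors through $\hh'\otimes A/I$, then $\gg'\otimes I$ annihilates the highest weight vector $v$; since $\gg'\otimes I$ is an ideal in $\gg'\otimes A$, the subspace $(\gg'\otimes I)V(\psi)$ is a submodule avoiding $v$, so irreducibility forces it to be zero. Hence $V(\psi)$ is a highest weight module for the quotient $\gg'\otimes A/I$. Because $\dim A/I<\infty$, the negative part $N^-\otimes A/I$ has the same $\hh'$-weights as $N^-$ with multiplicities scaled by $\dim A/I$, so each weight of $U(N^-\otimes A/I)$ has finite multiplicity by PBW, and hence so does each weight of its quotient $V(\psi)$. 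The main obstacle is the necessity direction: given only that weight spaces of $V(\psi)$ are finite-dimensional, extract a co-finite annihilating ideal. My plan here is to study the action of the abelian algebra $\hh'\otimes A$ on the finite-dimensional top $\hh'$-weight space through $v$, produce polynomial dependencies among the $t^{m}$ that cut out a co-finite ideal $J\subseteq A$ with $(\hh'\otimes J)v=0$, and then propagate this vanishing to $(\gg'\otimes I)V(\psi)=0$ for a possibly smaller co-finite $I$ by exploiting Chevalley-type relations $[e_{\alpha}\otimes a,\,f_{\alpha}\otimes b]=h_{\alpha}\otimes ab$ to transfer annihilation between $\hh'$ and the root spaces, together with the irreducibility of $V(\psi)$ to spread the annihilation to the whole module.
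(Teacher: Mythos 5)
The paper does not prove this lemma at all; it simply collects three results with citations to \cite{BGLZ} and \cite{E2}. So there is no ``paper's proof'' to compare against, and your attempt is a from-scratch reconstruction. Parts of it work, but there are two genuine gaps.

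Your treatment of part (2) is correct: the identity $(V(\psi)\otimes A)_{(\mu,k)}=V(\psi)_\mu\otimes\CC t^k$, combined with Proposition~\ref{(1.2)-Proposition} realizing $V(\psi)\otimes A$ as a finite direct sum of grade-shifted copies of $V(\overline\psi)$, does give the equivalence. Your sketch for part (1) is in the right spirit but too loose: a homogeneous element of $M(\overline\psi)\cong U(N^-\otimes A)\otimes A_\psi$ is generally a sum $\sum_i u_i\otimes a_i$, not a simple tensor $u\otimes a$, so ``one recovers a singular vector $u\in M(\psi)$'' by inverting $a$ needs more work to justify; you need to organize the sum, use that the $a_i$ can be normalized by invertibility, and track both the $\hh$-weight and the $D$-grading.

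The serious problems are in part (3). For sufficiency, you assert that if $\psi$ kills $\hh'\otimes I$ then ``$\gg'\otimes I$ annihilates the highest weight vector $v$.'' This is false: $N^-\otimes I$ does not annihilate $v$. Consequently the next step (``the submodule $(\gg'\otimes I)V(\psi)$ avoids $v$'') has no justification; even granting $(\gg'\otimes I)v=0$, one could a priori have $v=(x\otimes p)w$ for some other $w$. The correct route (and what underlies Lemma~\ref{(3.10)-Lemma}) is through the Verma module: $(\gg'\otimes I)M(\psi)$ is a proper submodule of $M(\psi)$ because the quotient is the Verma-type module for $\gg_A/(\gg'\otimes I)$, which is nonzero by PBW; hence $(\gg'\otimes I)M(\psi)$ sits inside the unique maximal submodule and so $(\gg'\otimes I)V(\psi)=0$. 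Only then does the finite-dimensionality of weight spaces follow from $\dim A/I<\infty$. For necessity, your plan to ``study the action of $\hh'\otimes A$ on the finite-dimensional top $\hh'$-weight space'' cannot produce any constraints: the top weight space is $\CC v$ and $\hh'\otimes A$ acts there by the scalars $\psi(h\otimes t^m)$, imposing no relations. The constraints must come from lower weight spaces. Concretely, for each simple root $\alpha_i$ the space $V(\psi)_{\lambda-\alpha_i}$ is spanned by $\{(f_i\otimes t^m)v\}_{m\in\ZZ^n}$; finite-dimensionality yields a co-finite ideal $J_i=\{p\in A: (f_i\otimes p)v=0\}$, and applying $e_i\otimes t^s$ to a relation $(f_i\otimes p)v=0$ gives $\psi(h_i\otimes t^s p)=0$ for all $s$. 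Since $\hh'$ is spanned by the simple coroots $h_i$, intersecting the $J_i$ produces the required co-finite ideal $I$ with $\psi(\hh'\otimes I)=0$.
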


\begin{remark} From the Proof of Lemma 3.7 in [E2], we can choose the co-finite
ideal $I$ to be generated by polynomials $P_i$ in the variable
$t_i$ with nonzero constant term, see also \cite{BGLZ}, Theorem
2.9.
\end{remark}

\section{Modules with finite dimensional spaces}
 In this section we discuss modules with
finite dimensional weight spaces.  We have seen in Section 1 that
$V(\psi)$ has finite dimensional weight spaces if there exist
polynomials $P_1, \cdots, P_n$ in variable $t_1, \cdots, t_n$ which
generate a co-finite ideal $I$.   We can assume that each
polynomial is not a constant.  Indeed, otherwise the ideal $I$
coincides with the algebra $A$, $\psi$ is a trivial function and
the corresponding module is one dimensional.

Set $\Gamma=Supp \, \overline{\psi}$. Under our assumption
$\Gamma$ is a subgroup of $\ZZ^n$. (see below Lemma 1.1)

\begin{lemma}\label{(2.1)-Lemma} The rank of $\Gamma$ equals $n$.
\end{lemma}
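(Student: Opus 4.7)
The plan is to argue by contradiction. Suppose $r := \mathrm{rank}\,\Gamma < n$, so there exists a nonzero $\ZZ$-linear form $\phi \colon \ZZ^n \to \ZZ$ with $\phi(\Gamma) = 0$. My first step is to translate this rank hypothesis into a vanishing statement about $\psi$. By the definition of $\overline{\psi}$, the algebra $A_{\psi} = \overline{\psi}(U(\hh' \otimes A))$ is generated as a subalgebra of $A$ by the monomials $\psi(h \otimes t^m)\,t^m$ for $h \in \hh'$, $m \in \ZZ^n$; by the irreducibility criterion (Lemma~\ref{(1.2)-Lemma}) every nonzero homogeneous element of $A_{\psi}$ is invertible in $A_{\psi}$. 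Hence $\Gamma$ coincides with the subgroup of $\ZZ^n$ generated by the set $S := \{m \in \ZZ^n : \psi(h \otimes t^m) \neq 0 \text{ for some } h \in \hh'\}$, so $S \subseteq \ker\phi$ and
$$\psi(h \otimes t^m) = 0 \quad \text{for all } h \in \hh' \text{ and all } m \in \ZZ^n \text{ with } \phi(m) \neq 0.$$

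Next I would exploit the exp-polynomial structure of $\psi$. Because $\psi$ factors through $\hh' \otimes (A/I)$ with $I = (P_1(t_1), \dots, P_n(t_n))$, each $P_i$ non-constant with nonzero constant term, the Chinese Remainder Theorem decomposes $A/I$ into a finite direct sum of local Artinian factors indexed by the tuples $\vec{\alpha} = (\alpha_1, \dots, \alpha_n) \in (\CC^{*})^n$ of joint roots of the $P_i$. Expanding $t_i^{m_i}$ in each factor via $t_i = \alpha_i + (t_i - \alpha_i)$ yields the standard explicit formula
$$\psi(h \otimes t^m) = \sum_{\vec{\alpha}} Q_{h,\vec{\alpha}}(m)\,\vec{\alpha}^{\,m},$$
where each $Q_{h,\vec{\alpha}}$ is a polynomial in $m = (m_1, \dots, m_n)$. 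In particular, for any fixed $h$ and any $w, v_0 \in \ZZ^n$, the one-variable map $k \mapsto \psi(h \otimes t^{w + k v_0})$ is an exp-polynomial function of $k \in \ZZ$ whose exponential bases $\vec{\alpha}^{\,v_0}$ all lie in $\CC^{*}$.

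To conclude, I would choose $v_0$ with $\phi(v_0) \neq 0$; then $\phi(w + k v_0) = \phi(w) + k\phi(v_0)$ vanishes for at most one $k \in \ZZ$, so by the first step the sequence above is zero for all but at most one value of $k$. An exp-polynomial sequence on $\ZZ$ whose bases all lie in $\CC^{*}$ satisfies a linear recurrence with nonzero trailing coefficient, and such a recurrence admitting $N$ consecutive zeros (where $N$ is its order) must be identically zero. Taking $k = 0$ and letting $h$ and $w$ vary, $\psi$ vanishes identically on $\hh' \otimes A$, forcing $A_{\psi} = \CC$ and $\Gamma = \{0\}$; this contradicts the standing hypothesis that the $P_i$ are non-constant, under which $V(\psi)$ is not the trivial one-dimensional module. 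The main technical step is the linear-recurrence argument, and its validity hinges on each $P_i$ having a nonzero constant term, so that every base $\vec{\alpha}^{\,v_0}$ is invertible.
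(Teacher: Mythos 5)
Your proof is correct, but it takes a noticeably heavier route than the paper's. Both arguments share the same skeleton: rank $\Gamma<n$ forces $\psi(h\otimes t^m)=0$ off a hyperplane of $\ZZ^n$, and the co-finite-ideal relation in a transverse direction then propagates the vanishing onto the hyperplane, so $\psi\equiv 0$ and $V(\psi)$ is the (excluded) trivial module. The difference is in the execution of the second step. The paper first performs a $GL_n(\ZZ)$ change of variables so that $\Gamma\subseteq\{m\in\ZZ^n:\;m_n=0\}$, and then uses only the single relation $\psi(h\otimes t^mP_n(t_n))=0$: writing $P_n(t_n)=c_0+c_1t_n+\cdots$ with $c_0\neq 0$ (Remark 1) and $m_n=0$, every term except the constant one dies by the support condition, leaving $c_0\,\psi(h\otimes t^m)=0$ -- a one-line computation. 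You instead keep a general linear form $\phi$ with $\phi(\Gamma)=0$, invoke the full Chinese Remainder/exp-polynomial expansion of $\psi_h$ (i.e.\ Lemma 2.7 of [RZ], which the paper only brings in later, in Lemma 3.2), restrict to a line $w+kv_0$ transverse to $\ker\phi$, and run a linear-recurrence ``$N$ consecutive zeros with nonzero trailing coefficient'' argument. This is valid (the bases $\vec{\alpha}^{\,v_0}$ are indeed nonzero because the roots $\alpha_i$ of the $P_i$ are units in $A$), and it has the merit of making the paper's ``it is easy to conclude'' fully explicit without changing coordinates; but it buys nothing extra here, since after the lattice change of variables the transverse direction can be taken to be $e_n$ and your recurrence collapses to exactly the paper's constant-term observation.
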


\begin{proof} Suppose that the rank of $\Gamma$ is less than $n$.  After a change of
variables we can assume that
$$h \otimes t^m\cdot t^k_n v = 0,$$
for all $k \in \ZZ \backslash \{0\}$ and $m\in \ZZ^n$ such that
$m_n = 0$.  On the other hand, there exists a non-constant
polynomial $P_n$ in variable $t_n$ such that $h\otimes t^m\cdot
P_n(t_n) v = 0$. Now it is easy to conclude that $(h \otimes
t^m)\cdot v = 0$ for all $m \in \ZZ^n$. Thus the function $\psi$
is trivial and $V(\psi)$ is a one dimensional module, which is of no interest.
\end{proof}

It follows from Lemma \ref{(2.1)-Lemma} that $$A_{\psi} = \CC
[t_1^{\pm r_1}, \cdots, t_n^{\pm r_n}]$$ and $$\Gamma = r_1 \ZZ
\oplus \cdots, \oplus r_n \ZZ$$ for some positive integers $r_1,
\cdots, r_n$.  Set $R = r_1 r_2 \cdots, r_n$. If $R = 1$ then
$V(\psi) \otimes A \cong V(\overline{\psi})$ as
$\widetilde{\gg}_A$-modules.

From now on we assume that $R \ge 2$.  Then  $V(\psi)\otimes A$
decomposes into $R$ irreducible $\widetilde{\gg}_A$-modules by Proposition 1.2(1). 
We will give a more explicit description of these components in
Section 4.

\begin{proposition}\label{(2.3)-Proposition}   For each $i, 1 \le i
\le n$, there exists a $\gg$-module automorphism $\sigma_i$ of
$V(\psi)$ of order $r_i$.
\end{proposition}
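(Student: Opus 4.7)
The plan is to realize $\sigma_i$ as the intertwiner for a twist of $V(\psi)$ by an order-$r_i$ Lie-algebra automorphism of $\gg_A$ that acts as the identity on $\gg$. Fix a primitive $r_i$-th root of unity $\xi$ and define $\tau_i \colon \gg_A \to \gg_A$ by
$$\tau_i(X \otimes t^m) = \xi^{m_i}\, X \otimes t^m, \quad X \in \gg',\ m \in \ZZ^n,$$
and $\tau_i(h) = h$ for $h \in \hh''$. A direct check (using $\xi^{m_i+s_i} = \xi^{m_i}\xi^{s_i}$ and the fact that the bracket of $\hh''$ with $\gg' \otimes t^m$ preserves the $t^m$-factor) shows $\tau_i$ is a Lie algebra automorphism of $\gg_A$ of order $r_i$ whose restriction to $\gg$ is the identity.

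Next I verify that $\psi \circ \tau_i = \psi$. On $\hh''$ this is trivial; on $\hh' \otimes A$ it reduces to showing $\xi^{m_i} = 1$ whenever $\psi(h \otimes t^m) \neq 0$. But in that case $\overline{\psi}(h \otimes t^m) = \psi(h \otimes t^m)\, t^m$ is a nonzero homogeneous element of $A_\psi = \CC[t_1^{\pm r_1}, \ldots, t_n^{\pm r_n}]$ of multidegree $m$, so $m \in \Gamma = r_1\ZZ \oplus \cdots \oplus r_n\ZZ$ and in particular $r_i \mid m_i$, giving $\xi^{m_i} = 1$. Consequently the twisted module $M(\psi)^{\tau_i}$ is a highest weight $\gg_A$-module with the same highest weight $\psi$ as $M(\psi)$, hence isomorphic to $M(\psi)$. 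Passing to the unique irreducible quotient yields a linear bijection $\sigma_i \colon V(\psi) \to V(\psi)$ satisfying
$$\sigma_i(Y v) = \tau_i(Y)\, \sigma_i(v), \quad Y \in \gg_A,\ v \in V(\psi).$$

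Since $\tau_i|_{\gg} = \mathrm{id}$, the intertwining relation shows $\sigma_i$ commutes with the $\gg$-action, so it is a $\gg$-module automorphism of $V(\psi)$. Finally, $\sigma_i^{r_i}$ intertwines $\tau_i^{r_i} = \mathrm{id}$, so it commutes with the full $\gg_A$-action on the irreducible module $V(\psi)$. Schur's lemma (applicable because the weight spaces of $V(\psi)$ are finite dimensional, so $V(\psi)$ has at most countable dimension over the uncountable field $\CC$) forces $\sigma_i^{r_i}$ to be a nonzero scalar; rescaling $\sigma_i$ by an $r_i$-th root of this scalar gives $\sigma_i^{r_i} = \mathrm{id}$.

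I expect the only genuinely content-bearing step to be the identity $\psi \circ \tau_i = \psi$, which rests on the explicit description of $A_\psi$ established just before the proposition. Everything else is a standard twist-and-Schur argument, with no real obstacles beyond bookkeeping.
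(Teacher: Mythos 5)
Your proof is correct and follows essentially the same route as the paper: both construct the order-$r_i$ automorphism of $\gg_A$ that scales $t_i$ by $\xi_i$ and is the identity on $\gg$, and both rest on the same key observation that $\psi(h\otimes t^m)=0$ unless $r_i\mid m_i$ (the paper phrases this as invariance of the defining left ideal $J$ of $M(\psi)$ and of its maximal submodule, you as $\psi\circ\tau_i=\psi$ plus uniqueness of the irreducible highest weight quotient). The only real divergence is at the end: the paper gets $\sigma_i^{r_i}=\mathrm{id}$ for free because its intertwiner fixes the highest weight vector, whereas you invoke Schur's lemma and rescale -- both are fine.
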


\begin{proof} Fix $i\in \{1, \ldots, n\}$. Let $\xi_i$ be the $r_{i}$-th
primitive root of unity. Define
 an algebra automorphism $\sigma_i$ of $A$ as follows: $\sigma_i (t_i) = \xi_i
t_i$ and $\sigma_i (t_j) = t_{j}$ for $i \not= j$.  It can be
extended to an automorphism of $\gg' \otimes A$ by  the identity
action on $\gg'$. Also, defining the identity action of $\sigma_i$
on $h''$ we obtain an automorphism  of order $r_i$ on $\gg_A$. It
can be further extended to an algebra automorphism of $U(\gg_A)$.
This automorphism, clearly, respects the triangular decomposition
of $\gg_A$. Let $J$ be the left ideal generated by $N^+\otimes A$,
$h \otimes t^m - \psi (h \otimes t^m)$ and $h' -\psi (h')$,
$h\in \hh'$, $h' \in \hh''$ and $ m \in \ZZ^n$.

Suppose that $m_i \not\equiv 0 (r_i)$. Then we have
$$\begin{array}{lll} &&  \sigma_i(h \otimes t^m - \psi(h \otimes t^m)\big) \\
& =  & h \otimes \xi_i^{m_i} t^m \\
&= & \xi^{m_i} (h \otimes t^m - \psi (h \otimes t^m)\big).
\end{array}$$
On the other hand, if $m_i \equiv 0 (r_i)$ then we have
$$\begin{array}{lll}
&& \sigma_i (h \otimes t^m - \psi (h \otimes t^m)) \\
&= & h \otimes \xi_i^{m_i} t^m - \psi (h \otimes t^m) \\
&= & h \otimes t^m - \psi (h \otimes t^m),
\end{array}
$$
as $ \xi_i^{m_i} = 1$. This implies that
 $\sigma_i$ leaves the  ideal $J$ invariant.
 Since $M(\psi)$  can be identified with
$U(\gg_A)/J$, we obtain that $\sigma_i$ induces an automorphism of
$M(\psi)$.

 Suppose now that $N$ is a proper submodule of $M(\psi)$.
Let $\sigma_i ((X_{\beta_1} \otimes t^{m^1}) \cdots,
(X_{\beta_{\ell}} \otimes t^{m^{\ell}}) v) \in \sigma_{i} (N)$, where
$X_{\beta_j} \in \gg', m^1, \cdots, m^{\ell} \in \ZZ^n$.  Then we
have
$$\begin{array}{lll}
& & (X_{\beta}\otimes t^m)\cdot \sigma_i ((X_{\beta_1} \otimes
t^{m^{1}}) \cdots,
(X_{\beta_{\ell}} \otimes t^{ m^{\ell}}) v) \\
& = & \xi_i^{-m_i} \sigma_i ((X_{\beta} \otimes t^{m})
(X_{\beta_1} \otimes t^{m^{1}}) \cdots, (X_{\beta_{\ell}} \otimes
t^{m^{\ell}}) v) \in \sigma_i(N).
\end{array}$$
Thus $\sigma_i(N)$ is also a submodule of $M(\psi)$.  Clearly,
$\sigma_i(N)$ is a proper submodule, as $\sigma_i(v) = v$ for a
highest weight vector $v$. We conclude that  the sum of all proper
submodules $M(\psi)$ is invariant under $\sigma_i$.  Hence,
$\sigma_i$ is an automorphism on $V(\psi)$ of order $r_i$.
\end{proof}

Let $\overline{\Gamma} = \ZZ/_{r_1 \ZZ} \oplus \cdots, \oplus
\ZZ/_{r_n \ZZ} \cong \ZZ^n/\Gamma$. We have the following
immediate corollary.

\begin{corollary}\label{(2.4)-Corollary}
\begin{enumerate}

\item For any $k = (k_1, \cdots, k_n) \in \overline{\Gamma}$  there
exists a $\gg$-module automorphism $\eta_k=\sigma_1^{k_1} \cdots,
\sigma_n^{k_n}$ of $V(\psi)$ of finite order. 
\item For any $k =(k_1, \cdots, k_n) \in \overline{\Gamma}$ there exists a finite
order automorphism $\tau_k$ of $\widetilde{\gg}_A$ such that $\tau_k(X \otimes t^m) = \xi^{k_1 m_1}_1\cdots, \xi_n^{k_n m_n} X
\otimes t^m \  {\rm for} \  X \in \gg', m = (m_1, \cdots, m_n) \in \ZZ^n  \mbox{and} \ 
\tau_k (\gg  \oplus D) = Id.$
\end{enumerate}
\end{corollary}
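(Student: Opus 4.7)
The plan is to deduce both parts directly from Proposition~\ref{(2.3)-Proposition}, modulo a handful of routine verifications. For part (1), I would first observe that the $\sigma_i$ constructed in Proposition~\ref{(2.3)-Proposition} mutually commute: the underlying algebra automorphisms of $A$ (each acting by a scalar on a single variable $t_i$) commute, these commuting automorphisms lift to commuting automorphisms of $\gg_A$ acting trivially on $\gg'$ and $\hh''$, they leave invariant the defining left ideal $J$ of $M(\psi)$ and hence descend to commuting automorphisms of $M(\psi)$, and finally descend further to $V(\psi)$. Then $\eta_k := \sigma_1^{k_1}\cdots\sigma_n^{k_n}$ is well defined and has finite order. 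Well-definedness on the quotient $\overline{\Gamma} = \ZZ^n/\Gamma$ follows because $\sigma_i^{r_i} = \mathrm{Id}$ on $V(\psi)$, so replacing $k_i$ by $k_i + r_i$ does not change $\eta_k$.

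For part (2), I would simply write down the candidate $\tau_k$ by the two prescribed formulas and check it is a Lie algebra homomorphism, since this is a direct computation on brackets. On a pair of generators $X\otimes t^m, Y\otimes t^s \in \gg'\otimes A$,
\begin{equation*}
[\tau_k(X\otimes t^m),\tau_k(Y\otimes t^s)] = \xi_1^{k_1(m_1+s_1)}\cdots\xi_n^{k_n(m_n+s_n)}\,[X,Y]\otimes t^{m+s},
\end{equation*}
which equals $\tau_k([X\otimes t^m, Y\otimes t^s])$. For $h \in \hh''$ or $d_i \in D$, the defining relations $[h, X\otimes t^m] = [h,X]\otimes t^m$ and $[d_i, X\otimes t^m] = m_i\, X\otimes t^m$ are preserved because the scalar $\xi_1^{k_1 m_1}\cdots\xi_n^{k_n m_n}$ passes through the bracket. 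Thus $\tau_k$ is a Lie algebra endomorphism, and it is invertible with inverse $\tau_{-k}$, so it is an automorphism. It has finite order since $\tau_k^N = \mathrm{Id}$ whenever $N$ is divisible by each $r_i$, and well-definedness on $\overline{\Gamma}$ is clear since $\xi_i^{k_i m_i}$ depends on $k_i$ only modulo $r_i$.

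I do not anticipate any real obstacle: the only point that requires a touch of care is the well-definedness of $\eta_k$ on $\overline{\Gamma}$, which needs the commutativity of the $\sigma_i$ together with the sharp order statement $\sigma_i^{r_i} = \mathrm{Id}$ supplied by Proposition~\ref{(2.3)-Proposition}. Everything else is bookkeeping on bracket identities.
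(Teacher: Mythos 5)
Your proposal is correct and matches the paper's intent: the paper states this as an immediate corollary of Proposition~\ref{(2.3)-Proposition} and offers no written proof, and your argument simply supplies the routine verifications (commutativity of the $\sigma_i$, well-definedness on $\overline{\Gamma}$ via $\sigma_i^{r_i}=\mathrm{Id}$, and the direct bracket check that $\tau_k$ is a Lie algebra automorphism) that the authors regard as obvious.
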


\section{Irreducible components of $V(\psi)
\otimes A$}  In this section we  give a description of the
components of $V(\psi) \otimes A$ as a $\widetilde{\gg}_A$ module in terms of the automorphisms
constructed in Corollary~\ref{(2.4)-Corollary}.

First, recall the definition of exp-polynomial maps from
\cite{BZ}.

\begin{definition}\label{(3.1)-Definition} A function $f : \ZZ^n \to \CC$ is
called an \emph{exp-polynomial map} if $f$ can be written as a
finite sum
$$f(m_1, \cdots, m_n) = \displaystyle\sum_{\substack{a \in (\CC^*)^n\\ k\in 
\mathbb Z^n}}
C_{k,a} m_1^{k_1}\cdots, m_n^{k_n} a_1^{m_1} \cdots, a_n^{m_n},$$
$C_{k,a}\in \CC$ and $a = (a_1, \cdots, a_n)\in (\CC^*)^n, k=(k_1,\ldots, k_n)$
\end{definition}

Given $h \in \hh'$ define the function $\psi_h: \ZZ^n \to \CC$ as
follows: $$\psi_h(m_1, \cdots, m_n):= \psi(h \otimes t^{m_1}_1
\cdots, t_n^{m_n})$$ for $m_1, \cdots, m_n \in \ZZ$.

\begin{lemma}\label{(3.2)-Lemma} The following conditions are equvilent for 
$\psi \in (\hh_{A})^*.$
\begin{enumerate}
\item[{(1)}] There exist polynomials $P_1, \cdots,P_n$ in variables
$t_1, \cdots,t_n$ such that
\item[{(R)}] $\psi(\hh' \otimes A P_i(t_i))= 0$
\item[{(2)}] $\psi_h$ is an exp-polynomial map for all $h \in \hh'$.
\item[{(3)}] $V(\psi)$ has finite dimensional weight spaces with respect to $\hh$.
\end{enumerate}
\end{lemma}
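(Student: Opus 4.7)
The plan is to establish the cycle $(3) \Rightarrow (1) \Rightarrow (2) \Rightarrow (3)$, since two of these three implications reduce to results already recalled in Section~1 and the introduction, leaving only one implication that requires a direct argument (a standard linear-recurrence calculation).

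First I would establish $(3) \Leftrightarrow (1)$ using Lemma~1.6 together with the Remark that follows it. Lemma~1.6 asserts that $V(\psi)$ has finite dimensional weight spaces if and only if $\psi$ factors through $\hh' \otimes A/I$ for some co-finite ideal $I$ of $A$, and the Remark tells us that such an $I$ can be chosen of the form $(P_1(t_1),\ldots,P_n(t_n))$ with each $P_i$ a non-constant polynomial in $t_i$ with nonzero constant term. The statement that $\psi$ vanishes on $\hh' \otimes A P_i(t_i)$ for each $i$ is then precisely condition (R). Conversely, given $P_1,\ldots,P_n$ satisfying (R), the ideal $\sum_i A P_i(t_i)$ is co-finite (after multiplying each $P_i$ by a suitable monomial if needed to guarantee a nonzero constant term, which does not affect (R) since we work in Laurent polynomials), so (1) implies (3) via the same lemma.

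Next, for $(1) \Rightarrow (2)$, I would fix $h \in \hh'$ and expand condition (R) coordinate-wise: writing $P_i(t_i) = \sum_j c_j^{(i)} t_i^j$, condition (R) reads
$$\sum_j c_j^{(i)}\, \psi_h(m_1, \ldots, m_i + j, \ldots, m_n) = 0$$
for every $m \in \ZZ^n$ and every $i$. This is a linear recurrence in the single variable $m_i$ with coefficients independent of the remaining $m_j$. Factoring $P_i(t_i) = \prod_\ell (t_i - a_{i\ell})^{k_{i\ell}}$ and invoking the standard theory of linear recurrences, the function $m_i \mapsto \psi_h(\ldots, m_i, \ldots)$ lies in the span of $\{\, m_i^{s}\, a_{i\ell}^{m_i} : 0 \le s < k_{i\ell}\,\}$. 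Since shift operators in distinct coordinates commute, one can apply this analysis in each variable in turn, producing the desired exp-polynomial expression of Definition~3.1.

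Finally, for $(2) \Rightarrow (3)$ I would invoke the Billig--Zhao result recalled in the introduction, namely that $V(\psi)$ has finite dimensional weight spaces whenever each $\psi_h$ is exp-polynomial. This closes the cycle. The only delicate step is the recurrence-solution in $(1) \Rightarrow (2)$, where one must ensure that the one-variable analysis assembles correctly across all $n$ coordinates; this is the main thing to be careful about, but because the shift operators in different variables commute and the recurrence in $t_i$ has coefficients independent of the other $m_j$, the iterated argument goes through and the exp-polynomial form is forced.
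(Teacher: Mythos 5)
Your proof is correct and follows essentially the same plan as the paper's, with one point of divergence in how the passage from condition (R) to exp-polynomial form is justified. The paper obtains $(1) \Leftrightarrow (2)$ in a single stroke by citing Lemma~2.7 of \cite{RZ} — identifying (R) with equation (2.4) there and noting that its conclusion (2.5) is exactly the exp-polynomial form — and then derives $(2) \Leftrightarrow (3)$ by chaining through Lemma~1.3(3) together with Remark~1. You instead unpack $(1) \Rightarrow (2)$ by hand as an iterated two-sided linear-recurrence computation (which is precisely the content of \cite{RZ}, Lemma~2.7) and close the cycle with the Billig--Zhao sufficiency criterion \cite{BZ} for $(2) \Rightarrow (3)$ rather than reusing Lemma~1.3(3). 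The net effect is a more self-contained proof, at the mild cost of only recovering $(2) \Rightarrow (1)$ by going around the cycle instead of directly from \cite{RZ}. Your remark that each $P_i$ can be multiplied by a monomial to normalize its constant term to be nonzero, without affecting (R), is exactly the care needed so that the roots $a_{i\ell}$ are nonzero and the recurrence is solvable on all of $\ZZ^n$; and the observation that the coefficients in the $m_i$-direction may depend on the remaining $m_j$, which is then absorbed by iterating across coordinates, is the correct way to make the multivariable assembly rigorous.
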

\begin{proof}
Fix  $h \in \hh'$ and consider $\psi_h$. Taking $\psi_h$ in Lemma 2.7 of $[RZ],(1)\Leftrightarrow(2)$ follows. 
Just note that (2.4) in Lemma 2.7 of $[RZ]$ is precisely our relation $(R)$. 
Also note that the expression 2.5 in Lemma 2.7 of $[RZ]$ is an $\exp$-polynomial map.\\
Now $2\Leftrightarrow3$ follows from Lemma 1.3(3) and Remark 1.
\end{proof}

Given $a=(a_1, \ldots, a_n) \in (\CC^*)^n$, define the
exp-polynomial function $f_a: \ZZ^n \to \CC$ as follows:
$$f_a(m_1, \ldots, m_n)=m_1^{k_1} \cdots m_n^{k_n} a_1^{m_1} \cdots, a_n^{m_n}.$$

\begin{lemma}\label{(3.3)-Lemma} Functions
$f_a$  are linearly independent for different $a \in (\CC^*)^n$.
\end{lemma}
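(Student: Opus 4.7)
The plan is to reduce to $n=1$ and handle that case with a shift-operator argument.

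For $n=1$, suppose for contradiction a finite relation $\sum_{i=1}^N c_i m^{k_i} a_i^m = 0$ holds for every $m \in \ZZ$, with nonzero $c_i$ and distinct pairs $(k_i, a_i)$. Grouping terms with a common $a_i$ rewrites this as $\sum_{j=1}^M P_j(m) b_j^m = 0$ for distinct $b_j \in \CC^*$ and nonzero polynomials $P_j$ (nonzero because distinct monomials $m^{k}$ are linearly independent). Introduce the shift operator $(Sf)(m) = f(m+1)$. A direct computation shows that $(S - bI)$ sends $Q(m) b^m$ to $b(Q(m+1) - Q(m))b^m$, lowering $\deg Q$ by one, while for $c \neq b$ it sends $Q(m) c^m$ to $(cQ(m+1) - bQ(m)) c^m$, a polynomial of the same degree as $Q$ times $c^m$, with leading coefficient scaled by the nonzero factor $c-b$. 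Applying $(S - b_1 I)^{\deg P_1 + 1}$ therefore kills the first summand while preserving the shape of the other $M-1$ terms with still-nonzero polynomial factors. Induction on $M$ terminates with a single term $P(m) b^m = 0$, forcing $P \equiv 0$, a contradiction.

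For general $n \geq 2$, I would proceed by induction on $n$. Each $f_a$ factors as $\prod_{i=1}^n g_i^{(a)}(m_i)$ with $g_i^{(a)}(m) = m^{k_i} a_i^m$. Given a putative relation $\sum_a c_a f_a = 0$, fix $m_2, \ldots, m_n$ arbitrarily and view what remains as a one-variable relation in $m_1$; grouping the indices $a$ by the value of $g_1^{(a)}$ and applying the $n=1$ case to the distinct one-variable functions that appear forces each coefficient---itself a linear combination of $f$-type functions in $n-1$ variables---to vanish identically in $(m_2, \ldots, m_n)$. The inductive hypothesis then yields $c_a = 0$ for every index $a$.

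The main obstacle is the $n=1$ shift-operator bookkeeping, in particular verifying that successive applications of $S - b_1 I$ preserve nonzero leading coefficients in the surviving terms; once that is in place, the induction step on $n$ is routine.
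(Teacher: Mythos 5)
Your argument is correct. The paper itself does not give a proof here: it simply cites Corollary~2.4 of Billig--Zhao [BZ], which establishes exactly this kind of linear independence of exp-polynomial functions. Your shift-operator proof is a self-contained, elementary replacement: the $n=1$ computation that $(S-bI)$ lowers the polynomial degree of $Q(m)b^m$ while, for $c\neq b$, rescaling the leading coefficient of $Q(m)c^m$ by the nonzero factor $c-b$, is the standard difference-operator (generalized Vandermonde) argument, and your bookkeeping of nonvanishing leading coefficients is right; the reduction from $n$ variables to $n-1$ by freezing $m_2,\dots,m_n$, grouping by the one-variable factor in $m_1$, and invoking the inductive hypothesis on the resulting coefficients is also sound, since indices in the same group must differ in their remaining data. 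Note that you in fact prove something slightly stronger than the lemma as stated (independence over distinct pairs of exponent and base, not just distinct bases $a$), which is what is actually used later in Proposition~3.4; so your version is, if anything, the more convenient one. What the citation buys the authors is brevity; what your argument buys is independence from [BZ] and an explicit mechanism one can reuse.
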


\begin{proof} Follows from [\cite{BZ},Corollary 2.4].
\end{proof}

  Given $\lambda = (\lambda_1, \cdots, \lambda_n)$ define ${\rm
  exp}
\ \lambda: \ZZ^n \to \CC$ by
$${\rm exp} \  \lambda(m_1, \cdots, m_n) = \lambda_1^{m_1}\cdots, \lambda_n^{m_n}.$$
Assume that the image of $\overline{\psi}$ equals $A_{\psi} = \CC
[t_1^{\pm r_1} \cdots, t_n^{\pm r_n} ]$ and $R = r_1 r_2 \cdots,
r_n$.\\
Define $\overline{\Gamma}$ action on $(\CC^*)^n$ as
$k\cdot \lambda = (\xi_1^{k_1}\lambda_1, \cdots,\xi_n^{k_n} \lambda_n)$ where 
$k = (k_1, \cdots, k_n) \in \overline{\Gamma}$ and $\xi_i$ is the $r_i$-th
primitive root of unity. Write  $\xi^k = (\xi_1^{k_1},\cdots, \xi_n^{k_n})$.\\
Let $B$ be a set of coset representatives of this action.\\
Define operators $T_{k}$ such that 
$$T_{k}.\exp(\lambda) = \exp(k.\lambda)\  for \ k \in \overline{\Gamma}.$$
Further let $P_{R}= \displaystyle{\sum_{k \in \overline{\Gamma}}}T_{k}.$\\

\begin{proposition}\label{(3.5)-Proposition}   Suppose $\psi:\hh' \otimes A \to \CC$ is
an exp-polynomial map, that is  the map $\psi_h (m_1, \cdots, m_n)
= \psi (h \otimes t^m)$ is an exp-polynomial  for any $h \in
\hh'$.
 Then we have
$$\psi_h = P_R {\displaystyle{\sum_{\lambda \in B}}} p_{h, \lambda} exp \
\lambda,$$ with $p_{h, \lambda}$ being some polynomial function
of the form:
$$p_{h, \lambda}(m_1, \ldots, m_n) = \sum_{\ell\in \ZZ^n} C_{h, \lambda,\ell} \ m_1^{\ell_1} \cdots,
m_n^{\ell_n},$$ where $\lambda\in B$, $h \in \hh'$, the sum is
finite and $C_{h, \lambda, \ell}$ are constants depending on
$h,\lambda$ and $\ell$.
\end{proposition}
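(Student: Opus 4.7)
The plan is to combine the uniqueness of the exp-polynomial decomposition (Lemma 3.3) with a graded/support argument coming from the structure of $A_\psi$. First I would fix $h\in\hh'$ and write $\psi_h$ uniquely as $\psi_h=\sum_\mu q_{h,\mu}\exp\mu$, where $\mu$ runs over a finite subset of $(\CC^*)^n$ and each $q_{h,\mu}$ is a polynomial in $m_1,\ldots,m_n$. By Lemma 3.3 this decomposition is unique, so it will suffice to show that the coefficients $q_{h,\mu}$ are constant along each $\overline{\Gamma}$-orbit; then setting $p_{h,\lambda}=q_{h,\lambda}$ for $\lambda\in B$ and regrouping the sum by orbits gives exactly $\sum_{\lambda\in B}\sum_{k\in\overline{\Gamma}}p_{h,\lambda}\exp(k\cdot\lambda)=P_R\sum_{\lambda\in B}p_{h,\lambda}\exp\lambda$.

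The first key step is to establish that $\psi_h$ is supported on $\Gamma$. Since $\overline{\psi}:U(\hh'\otimes A)\to A$ is a $\ZZ^n$-graded algebra map (because $\overline{\psi}(h\otimes t^m)=\psi(h\otimes t^m)t^m$ is homogeneous of degree $m$), its image $A_\psi$ is a $\ZZ^n$-graded subalgebra of $A$. Under our assumption $A_\psi=\CC[t_1^{\pm r_1},\cdots,t_n^{\pm r_n}]$, whose nonzero homogeneous components live only in degrees $m\in\Gamma=r_1\ZZ\oplus\cdots\oplus r_n\ZZ$. Hence $\psi(h\otimes t^m)t^m\in A_\psi$ forces $\psi_h(m)=0$ whenever $m\notin\Gamma$.

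The second key step is to translate this support condition into orbit-invariance via the operator $T_k$. Extending $T_k$ to exp-polynomial functions by letting it fix polynomial coefficients, one has the pointwise formula $(T_k f)(m)=\xi^{k\cdot m}f(m)$. For $m\in\Gamma$ we have $\xi^{k\cdot m}=\prod_i\xi_i^{k_im_i}=1$ (since $r_i\mid m_i$), while for $m\notin\Gamma$ we have $\psi_h(m)=0$ by Step 1. Consequently $T_k\psi_h=\psi_h$ for every $k\in\overline{\Gamma}$. Expanding $T_k\psi_h=\sum_\mu q_{h,\mu}\exp(k\cdot\mu)=\sum_\mu q_{h,k^{-1}\cdot\mu}\exp\mu$ and invoking Lemma 3.3 then gives $q_{h,\mu}=q_{h,k^{-1}\cdot\mu}$, which is the required orbit-invariance.

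Finally, I would note that the $\overline{\Gamma}$-action on $(\CC^*)^n$ is free (if $\xi_i^{k_i}\lambda_i=\lambda_i$ then $\xi_i^{k_i}=1$, forcing $k_i\equiv 0\pmod{r_i}$), so the orbit sums $P_R\exp\lambda=\sum_k\exp(k\cdot\lambda)$ have no repetitions and $B$ cleanly parameterizes orbits; collecting terms then yields the stated formula. The main obstacle, or rather the main conceptual point requiring care, is the bookkeeping around $T_k$: one must be clear that $T_k$ acts only on the exponential part (so that polynomial coefficients are untouched and uniqueness of decomposition can be applied coefficient-by-coefficient), and one must verify that the $\ZZ^n$-graded structure of $A_\psi$ really does force the vanishing of $\psi_h$ outside $\Gamma$ rather than some weaker compatibility.
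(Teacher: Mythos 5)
Your proof is correct and follows essentially the same route as the paper: both arguments rest on the vanishing of $\psi_h$ off $\Gamma$ together with the linear independence of the $\exp\lambda$ (Lemma 3.3) to deduce that the polynomial coefficients are constant along $\overline{\Gamma}$-orbits, and then regroup the sum over a set $B$ of orbit representatives. Your packaging of the re-indexing step through the operator $T_k$, and your explicit justifications of the support claim (via the grading of $A_\psi$) and of the freeness of the $\overline{\Gamma}$-action, only make explicit what the paper uses implicitly.
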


\begin{proof} For each $h \in \hh'$ write the function $\psi_{h}$ in the form:
$$\psi_{h}(m_1, \cdots, m_n) = {\displaystyle{\sum_{\lambda \in (\CC^
*)^n}}} p_{h, \lambda} (m_1, \cdots, m_n) \lambda_1^{m_1} \cdots,
\lambda_n^{m_n},$$ where $p_{h,\lambda}$ is some polynomial map
for every $\lambda\in (\CC^ *)^n$. Then for a fixed $k = (k_1,
\cdots, k_n) \in \overline{\Gamma}$ we have
$$\psi_{h} (m_1, \cdots, m_n) = {\displaystyle{\sum_{\lambda \in (\CC^
*)^n}}} P_{h, k . \lambda} (\xi_1^{k_1} \lambda_1)^{m_1} \cdots, (\xi^{k_n}_n
\lambda_n)^{m_n}.$$

Since $\psi_h(m_1, \cdots, m_n) = 0$ if $m_i \not\equiv 0(r_i)$ for
some  $i$, and since $(\xi_j^{k_j})^{r_j} = 1$ for all $j$, we
have

$$\psi_h(m_1, \cdots, m_n) = {\displaystyle{\sum_{\lambda \in
(\CC^*)}}} p_{h, k .\lambda} \lambda_1^{m_1} \cdots, \lambda_n
^{m_n}.$$

 By Lemma \ref{(3.3)-Lemma}, we have
$$p_{h,\lambda} = p_{h, k . \lambda} \ {\rm for \ all \ } k \in
\overline{\Gamma}.$$ Thus if $\lambda$ occurs in the summation
then $\xi^k \lambda$ also occurs with the same coefficient. Hence,
we have
$$\psi_h = P_R {\displaystyle{\sum_{\lambda \in B}}} p_{h, \lambda} \ {\rm exp}
\ \lambda .$$ Note that in this summation if $\lambda$ occurs then
$k .\lambda$ does not occur (this is the meaning of $B$). Also
note that such an expression for $\psi_h$ need not be unique.
\end{proof}
We would now like to prove that $V(\psi)$ admits a certain tensor product decomposition 
if $\psi$ is a $\exp$-polynomial map. First we will prove certain results on 
$\exp$-polynomial maps. We need to use Lemma 2.7 of $[RZ].$

Let $\psi$ be an $\exp$-polynomial map, $P_1, \cdots,
P_n$ some polynomials in $t_1, \cdots, t_ n$, respectively, such that
$$\psi(h \otimes AP_i(t_i)) = 0,$$
for all $h \in \hh'$, $i=1, \ldots, n$.\\
We can assume that the leading terms of these polynomials equal
$1$. Then we have
$$\psi_h = {\displaystyle{\sum_{\lambda \in (\CC^*)^n}}} p_{h,\lambda}
\ {\rm exp}  \ \lambda.$$

For each $i=1, \ldots, n$, the $i$-th degree of $p_{h,
\lambda}(m_1,\cdots, m_n)$ be the maximal degree of $m_i$. This
degree does not depend on $h$ and only depends on the polynomial
$P_i$.

Suppose $\lambda = (\lambda_1, \cdots, \lambda_n)$ occurs in the
summation above. Then  $\lambda_i$ is a root of $P_i$ for each $i$
by (2.5) in Lemma 2.7 of $[RZ]$. The multiplicity of $\lambda_i$ is the $i$-th
degree of $p_{h, \lambda}$ plus one.    Further, we have seen in
the proof of Proposition 3.4 that if $\lambda_i$ occurs in
the sum then $\xi_i^{k_i}\lambda_i$ also occurs with the same
coefficient. Hence, if $\lambda_i$ is a root of $P_i$ then
$\xi_i^{k_i}\lambda_i$ is also a root of $P_i$ with the same
multiplicity. 
Then we can write
$$P_i (t_i) = {\displaystyle{\prod_{j=1}^{s_i}}} \ \
{\displaystyle{\prod_{\ell=1}^{r_i}}} (t_i - \xi_i^{\ell} a_{ij})^{b_{ij}},$$
for some scalars $a_{ij}, b_{i j}$ and $s_i$ depending on $P_i$.  Further, 
$(a_{ij}/a_{ij'})^{r_i} = 1$ implies $j= j'$.
Define
$$P_{i,\ell}(t_i) = {\displaystyle{\prod_{j=1}^{k_i}}} \ (t_i - \xi_i^{\ell}
a_{ij})^{b_{ij}}.$$
For each $i=1, \ldots, n$ fix $j_i$, $1 \le j_i \le r_i$.
Set $J = (j_1, \cdots, j_n)$ and consider the ideal $I_J$ of $A$
generated by $P_{1, j_1}\cdots, P_{n, j_n}$.

\begin{lemma}\label{(3.5)-Lemma} Let $J' = (j_1', \cdots, j_n')$,  
$1 \le j_i' \le r_i$, $i=1, \ldots, n$.
   Suppose $J \not= J'$.  Then the ideals $I_J$ and
$I_{J'}$ are co-prime, that is $I_J + I_{J'} = A$.
\end{lemma}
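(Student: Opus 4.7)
The plan is to reduce the claim to the one-variable statement that, for $J\ne J'$, if $i$ is any coordinate on which $J$ and $J'$ differ, then the single-variable polynomials $P_{i,j_i}(t_i)$ and $P_{i,j_i'}(t_i)$ are coprime in $\CC[t_i]$. Once this is established, a Bezout relation
$$f(t_i)\,P_{i,j_i}(t_i)+g(t_i)\,P_{i,j_i'}(t_i)=1$$
with $f,g\in\CC[t_i]\subset A$ immediately places $1$ into $I_J+I_{J'}$, since $P_{i,j_i}\in I_J$ and $P_{i,j_i'}\in I_{J'}$ by definition. So the whole lemma collapses to a root-disjointness computation in a single variable.

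First I will pick an index $i$ with $j_i\ne j_i'$, which exists because $J\ne J'$, and look at the factorizations
$$P_{i,j_i}(t_i)=\prod_{j=1}^{s_i}(t_i-\xi_i^{j_i}a_{ij})^{b_{ij}},\qquad
P_{i,j_i'}(t_i)=\prod_{j=1}^{s_i}(t_i-\xi_i^{j_i'}a_{ij})^{b_{ij}}.$$
Then I will argue by contradiction that they share no common root. Suppose $\xi_i^{j_i}a_{ij}=\xi_i^{j_i'}a_{ij'}$ for some $j,j'$. Raising both sides to the $r_i$-th power kills the roots of unity and gives $(a_{ij}/a_{ij'})^{r_i}=1$, which by the hypothesis built into the definition of the $a_{ij}$ forces $j=j'$. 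With $j=j'$ the equality reduces to $\xi_i^{j_i-j_i'}=1$, and since $1\le j_i,j_i'\le r_i$ with $\xi_i$ a primitive $r_i$-th root of unity, this forces $j_i=j_i'$, contradicting our choice of $i$.

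Having established coprimality in $\CC[t_i]$, it is immediate in $A$: the Bezout combination produced in $\CC[t_i]$ continues to hold in the larger ring $A=\CC[t_1^{\pm1},\ldots,t_n^{\pm1}]$, so $I_J+I_{J'}\ni 1$, i.e.\ $I_J+I_{J'}=A$, as required.

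The only genuine obstacle is the disjointness-of-roots argument in the middle paragraph, and even that is a very short calculation thanks to the crucial input $(a_{ij}/a_{ij'})^{r_i}=1\Rightarrow j=j'$, which was arranged precisely so that the distinct cosets $\xi_i^{1}a_{ij},\ldots,\xi_i^{r_i}a_{ij}$ for different $j$ stay in separate $\langle\xi_i\rangle$-orbits. Everything else is formal: Bezout in a PID, and the observation that a single index of disagreement between $J$ and $J'$ is enough, since we only need one pair of generators $(P_{i,j_i},P_{i,j_i'})$ to be coprime.
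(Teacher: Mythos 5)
Your proof is correct and follows essentially the same route as the paper: reduce to a single index $i$ where $J$ and $J'$ disagree, show $P_{i,j_i}$ and $P_{i,j_i'}$ have no common root, and invoke Bezout. The paper simply asserts the root-disjointness; you spell it out via the $r_i$-th power argument and the hypothesis $(a_{ij}/a_{ij'})^{r_i}=1\Rightarrow j=j'$, which is exactly the intended justification.
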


\begin{proof} Since $ J \not= J'$ there exists $i, 1 \le i \le n$,
such that $j_i \not= j_i'$.  Then the polynomial $P_{i, j_i}$ and $P_{i, j_i'}$
have no common roots.  Thus the ideal  of $\CC [t_i, t_i^{-1}]$ generated  by $P_{i, j_i}$ and
$P_{i, j_i'}$ coincides with $\CC
[t_i, t_i^{-1}]$, which implies the statement.  
\end{proof}

Chinese Reminder theorem implies immediately the following statement.

\begin{proposition}\label{(3.9)-Proposition} 
Let $$I = {\displaystyle{\prod_{J \in \overline{\Gamma}}}}
I_J = {\displaystyle{\bigcap_{J \in \overline{\Gamma}}}} I_J.$$ Then
$$\gg' \otimes A/I
\simeq  \gg'
\otimes \Big({\displaystyle{\bigoplus_{J \in \overline{\Gamma}}}}A/I_J\Big).$$
\end{proposition}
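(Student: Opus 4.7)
The plan is to deduce Proposition 3.9 directly from the Chinese Remainder Theorem, using Lemma 3.5 as the hypothesis check, and then transport the resulting $A$-module decomposition across the tensor product with $\gg'$.

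First I would recall the general form of the Chinese Remainder Theorem in a commutative ring (here $A = \CC[t_1^{\pm1},\ldots,t_n^{\pm1}]$, which is a commutative Noetherian ring): if $\{I_J\}_{J \in \overline{\Gamma}}$ is a finite family of pairwise coprime ideals, i.e.\ $I_J + I_{J'} = A$ whenever $J \neq J'$, then
\[
\prod_{J \in \overline{\Gamma}} I_J \;=\; \bigcap_{J \in \overline{\Gamma}} I_J,
\]
and the natural map
\[
A\big/\bigcap_{J \in \overline{\Gamma}} I_J \;\longrightarrow\; \bigoplus_{J \in \overline{\Gamma}} A/I_J,
\quad a + I \;\longmapsto\; (a + I_J)_{J \in \overline{\Gamma}},
\]
is an isomorphism of $A$-algebras. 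The pairwise coprimality is exactly the conclusion of Lemma 3.5, so this step is immediate and gives the first equality in the proposition as well as the decomposition $A/I \cong \bigoplus_{J \in \overline{\Gamma}} A/I_J$ of $A$-modules.

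Next I would tensor over $\CC$ with the vector space $\gg'$. Since tensor product of vector spaces commutes with direct sums, I get
\[
\gg' \otimes (A/I) \;\cong\; \gg' \otimes \Bigl(\bigoplus_{J \in \overline{\Gamma}} A/I_J\Bigr) \;\cong\; \bigoplus_{J \in \overline{\Gamma}} \gg' \otimes (A/I_J).
\]
To finish, I would observe that $\gg' \otimes I$ is the kernel of the surjection $\gg' \otimes A \twoheadrightarrow \gg' \otimes (A/I)$ (since $\gg'$ is a free $\CC$-module, tensoring with $\gg'$ is exact), so the left-hand side is canonically identified with $(\gg' \otimes A)/(\gg' \otimes I)$, which is precisely $\gg' \otimes A / I$ in the notation of the proposition.

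There is no serious obstacle: the proof is essentially bookkeeping once Lemma 3.5 is in hand. The only point deserving a moment of care is to note that one must tensor over $\CC$ (not over $A$) so that the direct sum really commutes with the tensor product, and that the Lie bracket on each summand $\gg' \otimes (A/I_J)$ is the natural one induced from $\gg' \otimes A$, making the displayed isomorphism an isomorphism of Lie algebras and not merely of vector spaces.
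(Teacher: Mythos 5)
Your proof is correct and follows exactly the route the paper intends: the paper simply states that the Chinese Remainder Theorem (with pairwise coprimality supplied by Lemma~\ref{(3.5)-Lemma}) ``implies immediately'' the result, and your write-up is a careful expansion of that one-line argument, including the harmless bookkeeping of tensoring with $\gg'$ over $\CC$.
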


We also have
   
\begin{lemma}\label{(3.10)-Lemma} [\cite{E2},Remark 3.9]
Let $\psi:\hh_A \to \CC$ be a linear map satisfying
$\psi(\hh' \otimes I) = 0$ for some co-finite ideal $I$ of $A$.  Then
$(\gg' \otimes I) V(\psi)= 0$.
\end{lemma}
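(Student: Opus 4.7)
The plan is to exploit the ideal structure: since $I$ is an ideal of $A$, the subspace $\gg'\otimes I$ is an ideal of the Lie algebra $\gg_{A} = \gg'\otimes A \oplus \hh''$ (the bracket preserves tensor degrees, and $\hh''$ acts on $\gg'$ without touching the $A$-factor). Hence we may form the quotient Lie algebra
\[
\bar{\gg}_{A} \;:=\; \gg_{A}/(\gg'\otimes I) \;\cong\; \gg'\otimes (A/I) \oplus \hh'',
\]
which inherits the triangular decomposition $N^{-}\otimes (A/I)\oplus \bar{\hh}_{A}\oplus N^{+}\otimes (A/I)$ from $\gg_{A}$, where $\bar{\hh}_{A}=\hh'\otimes (A/I)\oplus \hh''$.

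The hypothesis $\psi(\hh'\otimes I)=0$ is exactly what we need for $\psi$ to descend to a linear functional $\bar{\psi}\colon \bar{\hh}_{A}\to \CC$. Using this $\bar{\psi}$, I would mimic the construction of the beginning of Section 1 inside $\bar{\gg}_{A}$: form the one-dimensional $\bar{\hh}_{A}$-module annihilated by $N^{+}\otimes (A/I)$ on which $\bar{\hh}_{A}$ acts via $\bar{\psi}$, and induce it up to a $\bar{\gg}_{A}$-module $\bar{M}(\bar{\psi})$, with its unique irreducible quotient $\bar{V}(\bar{\psi})$. By inflation along the projection $\gg_{A}\twoheadrightarrow \bar{\gg}_{A}$, both of these are $\gg_{A}$-modules on which $\gg'\otimes I$ acts by zero.

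Next, I would produce a surjection $M(\psi)\twoheadrightarrow \bar{M}(\bar{\psi})$ of $\gg_{A}$-modules. The defining relations of $M(\psi)$, namely $(N^{+}\otimes A)\cdot v = 0$ and $h\cdot v = \psi(h)v$ for $h\in \hh_{A}$, are satisfied by the canonical highest weight vector of $\bar{M}(\bar{\psi})$, because $N^{+}\otimes A$ projects into $N^{+}\otimes (A/I)$ and $\psi$ factors through $\bar{\psi}$. The universal property of the induced module $M(\psi)$ then gives the required surjection. Composing with the quotient $\bar{M}(\bar{\psi})\twoheadrightarrow \bar{V}(\bar{\psi})$ exhibits $\bar{V}(\bar{\psi})$ as an irreducible quotient of $M(\psi)$. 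Since $M(\psi)$ has a one-dimensional top weight space and hence a unique maximal proper submodule, its irreducible quotient is unique, so $V(\psi)\cong \bar{V}(\bar{\psi})$.

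The conclusion is then immediate: $\gg'\otimes I$ kills $\bar{M}(\bar{\psi})$ by construction, hence also its quotient $\bar{V}(\bar{\psi})\cong V(\psi)$, so $(\gg'\otimes I)V(\psi)=0$. I do not anticipate a real obstacle here; the only thing that requires a moment of care is verifying that the triangular decomposition of $\gg_{A}$ descends cleanly to $\bar{\gg}_{A}$ and that the induction from $\bar{\psi}$ really does yield a $\gg_{A}$-module quotient of $M(\psi)$ rather than just of some larger object. Both points follow from the ideal property of $\gg'\otimes I$ and the factorization $\psi = \bar{\psi}\circ\pi$ on $\hh_{A}$.
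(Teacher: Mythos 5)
Your argument is correct and complete. Note that the paper does not actually prove this lemma --- it is quoted from [\cite{E2}, Remark~3.9] --- so there is no in-text proof to compare against; your quotient construction $\gg_A/(\gg'\otimes I)\cong \gg'\otimes(A/I)\oplus\hh''$, combined with the uniqueness of the irreducible quotient of $M(\psi)$, is the standard route and fills that gap cleanly. The one step you lean on implicitly, that $M(\psi)$ has a unique maximal submodule, holds because its top weight space with respect to $\hh$ is one-dimensional (by PBW, $M(\psi)\cong U(N^-\otimes A)v$ and $N^-\otimes A$ has strictly negative $\hh$-weights) and every proper weight submodule must avoid it; with that in place, $\bar V(\bar\psi)$ is indeed an irreducible $\gg_A$-module quotient of $M(\psi)$, hence equals $V(\psi)$, and is annihilated by $\gg'\otimes I$ by construction.
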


Let  $\alpha_1, \cdots, \alpha_k$ be a set of
simple roots of $\gg$. We  will consider a standard  ordering on $\hh^*$: for  $\eta_1, \eta_2\in \hh^*$ we say that $\eta_1
\leq \eta_2$ if and only if $\eta_2 - \eta_1 = \sum_i n_i \alpha_i$ for some  non-negative integers
$n_i's.$

\begin{proposition}\label{(3.11)-Proposition}  Let $I_1$  and $I_2$ be co-prime co-finite
ideals of $A$.  Let $\psi_1$ and $\psi_2$
be linear maps from $\hh_A \to \CC$ such that $\psi_i(\hh'
\otimes I_i) = 0$ for $i = 1, 2$.  Then $$V(\psi_1 + \psi_2) \simeq V(\psi_1)
\otimes V(\psi_2)$$ as $\gg_A$-modules.
\end{proposition}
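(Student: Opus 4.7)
I would equip $V(\psi_1)\otimes V(\psi_2)$ with the standard diagonal $\gg_A$-action $X\cdot(w_1\otimes w_2)=Xw_1\otimes w_2+w_1\otimes Xw_2$ and show that this tensor product is isomorphic to $V(\psi_1+\psi_2)$. First, a direct check using the coproduct shows that for highest weight vectors $v_i\in V(\psi_i)$, the vector $v_1\otimes v_2$ is annihilated by $N^+\otimes A$ and has $\hh_A$-weight $\psi_1+\psi_2$, so there is a canonical nonzero $\gg_A$-homomorphism $M(\psi_1+\psi_2)\to V(\psi_1)\otimes V(\psi_2)$ sending the cyclic generator to $v_1\otimes v_2$. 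Since $V(\psi_1+\psi_2)$ is the unique irreducible quotient of $M(\psi_1+\psi_2)$, the proposition will follow once I establish both the surjectivity of this map and the irreducibility of $V(\psi_1)\otimes V(\psi_2)$.

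The key technical input is a decoupling produced by Lemma~\ref{(3.10)-Lemma} together with co-primality $I_1+I_2=A$. Writing $1=e_1+e_2$ with $e_i\in I_i$ and using $(\gg'\otimes I_i)V(\psi_i)=0$ from Lemma~\ref{(3.10)-Lemma}, I get, for any $X\in\gg'$ and $m\in\ZZ^n$,
\[(X\otimes t^m e_2)(w_1\otimes w_2)=(X\otimes t^m)w_1\otimes w_2,\qquad (X\otimes t^m e_1)(w_1\otimes w_2)=w_1\otimes (X\otimes t^m)w_2,\]
since $ae_2\in I_2$ annihilates $V(\psi_2)$ and $ae_2\equiv a\pmod{I_1}$ (and symmetrically for $ae_1$). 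In other words, $\gg'\otimes I_2$ acts on the tensor product only through the first factor, reproducing there the entire $\gg'\otimes A/I_1$-action on $V(\psi_1)$, while $\gg'\otimes I_1$ acts symmetrically on the second factor. Surjectivity of the map from $M(\psi_1+\psi_2)$ is then immediate: applying $U(N^-\otimes I_2)$ to $v_1\otimes v_2$ yields $V(\psi_1)\otimes v_2$, and then applying $U(N^-\otimes I_1)$ yields all of $V(\psi_1)\otimes V(\psi_2)$.

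For irreducibility, given a nonzero submodule $W$, I pick $0\neq w=\sum_{i=1}^{k}x_i\otimes y_i\in W$ with the $x_i\in V(\psi_1)$ linearly independent. Because the one-dimensional highest weight space of $V(\psi_1)$ generates the whole module, Schur gives $\operatorname{End}_{\gg_A}V(\psi_1)=\CC$, and Jacobson's density theorem, combined with the surjection $\gg'\otimes I_2\twoheadrightarrow\gg'\otimes A/I_1$ supplied by co-primality, furnishes an element of $U(\gg'\otimes I_2)$ acting on $V(\psi_1)$ as the projection onto $x_1$. Via the decoupling this element acts on the tensor product only through the first factor and extracts $x_1\otimes y_1\in W$; the symmetric argument using $U(\gg'\otimes I_1)$ and the irreducibility of $V(\psi_2)$ then forces $W=V(\psi_1)\otimes V(\psi_2)$. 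The main obstacle I foresee is precisely this Jacobson-density step: one must verify that the density element can be chosen inside $U(\gg'\otimes I_2)$ so that its action decouples cleanly onto the first tensor factor, which requires reconciling the $\hh''$-part of the $\gg_A$-action on $V(\psi_1)$ with the purely $\gg'\otimes A$-flavoured action available through $\gg'\otimes I_2$. The remaining ingredients---the highest weight computation, the generation argument, and the appeal to uniqueness of the irreducible quotient of $M(\psi_1+\psi_2)$---follow formally from the decoupling.
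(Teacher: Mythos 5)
Your setup, the highest-weight computation for $v_1\otimes v_2$, and the cyclicity/surjectivity argument via $1=e_1+e_2$, $e_i\in I_i$, together with $(\gg'\otimes I_i)V(\psi_i)=0$, coincide with the paper's. The divergence --- and the gap --- is in the irreducibility step. Jacobson density is available for the image of $U(\gg_A)$ in $\operatorname{End}(V(\psi_1))$, but your decoupling only applies to elements of $U(\gg'\otimes I_2)$, whose image in $\operatorname{End}(V(\psi_1))$ coincides with that of $U(\gg'\otimes A)$, not of $U(\gg_A)=U(\gg'\otimes A)\,U(\hh'')$. Density of this smaller image requires $V(\psi_1)$ to be irreducible with scalar endomorphisms as a module over $\gg'\otimes A$ alone. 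That is not formal: a $\gg'\otimes A$-submodule is automatically $\hh'$-stable but not obviously $\hh''$-stable, and when $\gg$ is not of finite type the $\hh'$-weights do not separate the $Q^+$-homogeneous components (for $\gg$ affine, $\delta|_{\hh'}=0$). You flag exactly this obstacle yourself but leave it unresolved, so as written the proof is incomplete at its decisive step.

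The paper avoids density entirely. It takes a weight vector $v=\sum_i v_{\lambda i}\otimes v_{\mu i}$ of a submodule, chooses $\mu'$ \emph{minimal} among the weights occurring in the second factor, and uses irreducibility of $V(\psi_2)$ to produce $X\in U(\gg_A)_{\psi_2-\mu'}$ with $Xv_{\mu' s}=v_2$. The weight ordering then does the work of your projection: for $\mu\neq\mu'$ the vector $Xv_{\mu i}$, if nonzero, would have weight $\psi_2-\mu'+\mu$, forcing $\mu<\mu'$ and contradicting minimality, so it vanishes; and $Xv_{\mu' i}$ lands in the one-dimensional top weight space $\CC v_2$. Moreover, since $X$ is only ever applied to vectors of the single weight $\mu'$, its $U(\hh'')$-factors act by scalars there, so $X$ may be replaced by an element of $U(\gg'\otimes A)$, which decouples onto the second factor after multiplying its $A$-components by $e_1$. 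To salvage your version you would first have to prove that $V(\psi_i)$ remains irreducible over $\gg'\otimes A$ (an analogue of the classical fact that $L(\lambda)$ stays irreducible over $\gg'$); otherwise, replace the density step by this minimal-weight argument.
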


\begin{proof} We will show first that $V(\psi_1) \otimes V(\psi_2)$ is a cyclic module generated
by $v_1\otimes v_2$ where $v_i$ is a highest weight vector of
$V(\psi_i)$ for $i = 1,2$.

Since $I_1$, and $I_2$ are co-prime, we have
$I_1 + I_2 = A.$
Thus, there exists $f_i \in I_i$, $i=1,2$, such that $f_1 + f_2=1$.
For  $X \in \gg'$ and $h \in A$ consider
$$
\begin{array}{lll}
X f_1 h(v_1 \otimes v_2) & = & v_1 \otimes X f_1 h v_2 \\
& = & v_1 \otimes (X h - X f_2 h) v_2 \\
& = & v_1 \otimes X h v_2,
\end{array}
$$
as $X f_1 h v_1 = X f_2 h v_2 = 0$ by Lemma \ref{(3.10)-Lemma}.
Repeating this process we see that the module generated by $v_1 \otimes v_2$
contains $v_1 \otimes V(\psi_2)$.
Now taking $X f_2 h$ instead of $X f_1 h$ we obtain that the module 
generated by all elements $v_1 \otimes w$, $w \in V(\psi_2)$ contains $V(\psi_1)
\otimes V(\psi_2)$. We conclude that $v_1\otimes v_2$ generates $V(\psi_1) \otimes V(\psi_2)$.  
Hence, $V(\psi_1 + \psi_2)$
is a  homomorphic image of $V(\psi_1) \otimes V(\psi_2)$.
To complete the proof it is sufficient
to show that $V(\psi_1) \otimes V(\psi_2)$ is irreducible.  Let

\begin{equation} \label{(A3)}
v = {\displaystyle{\sum_{\lambda + \mu = \eta, i}}} v_{\lambda i} \otimes
v_{\mu i} \in V(\psi_1) \otimes V(\psi_2)
\end{equation}

  be a
 vector of weight $\eta$.  We can assume that $\{v_{\lambda
i}\}$
 is a linearly  independent set.   Note that $\lambda$ and $\mu$ may occur several
times but with different vectors.  That is why the additional
index $i$ is used.
  Now choose $\mu'$ to be the minimal among the $\mu$'s that occur in \eqref{(A3)}  with
respect to the  ordering defined above.  Fix a weight vector $v_{\mu' s}$ of
weight $\mu'$  in \eqref{(A3)}. 
Then there exists $X
\in U(\gg_A)_{\psi_2 - \mu'}$ such that $X v_{\mu' s} = v_2$ and  $X v_{\mu' i}$ is a multiple of $v_2$ for all $i$.
Using the arguments as above, we obtain that there exists
$X' \in U(\gg_A)$ such that $X' v = {\displaystyle{\sum_{\lambda +
\mu = \eta, \ i}}} v_{\lambda i} \otimes X v_{\mu i}$.  We claim
that $w=X v_{\mu i} = 0$ for $\mu \not= \mu'$.  Indeed, if  $w \neq 0$ then it has the weight   $\psi_2 - \mu' + \mu \le \psi_2$  implying that
$\mu < \mu'$. But this  contradicts the minimality of $\mu'$.  Hence, $X v_{\mu i} = 0$ for  $\mu \not= \mu'$ and any $i$.
  Thus
$$X' v = \sum v_{(\eta - \mu')i} \otimes k_i v_2,$$
for some $k_i\in \CC$. This element
 cannot be zero since $v_{(\eta -\mu')i}$ is a linearly independent set
and at least one term is nonzero.   Thus we
proved that given an element $v$ in $V(\psi_1) \otimes V(\psi_2)$ there
exists $X' \in U(\gg_A)$ such that $X' v = w \otimes v_2$ for some
nonzero  $w\in V(\psi_1)$.  Repeating this argument one easily shows
that the module generated by $w  \otimes v_2$ contains $v_1 \otimes v_2$.
This completes the proof.
\end{proof}

Let $\psi$ be an exp-polynomial map. For any $h\in \hh'$  set 
$$\Phi_h = {\displaystyle{\sum_{\lambda \in B}}} p_{h, \lambda} \ exp \lambda.
$$
Then
 $$\psi_h = P_R {\displaystyle{\sum_{\lambda \in B}}} p_{h,\lambda}
\ exp \ \lambda = {\displaystyle{\sum_{k \in \overline{\Gamma}}}} \xi^k
\Phi_h.$$ 
 Clearly, $\xi^k \Phi_h$ is an exp-polynomial map for any $k \in \overline{\Gamma}$.  We can choose
$\lambda$'s in $\Phi_h$ in such a way that corresponding polynomials are
$P_{1, r_1}, \cdots, P_{n, r_n}$.  So the polynomials corresponding to the exp-
polynomial map $\xi^k\Phi_h$  are  $P_{1, k_1}, \cdots, P_{n, k_n}$ where
$k = (k_1, \cdots, k_n)$. If $k' = (k_1', \cdots, k_n')$ then the ideals $I_k$ and $I_{k'}$ are mutually
co-prime if $k\neq k'$ by Lemma \ref{(3.5)-Lemma}.  Hence, we have the following:

\begin{corollary}\label{(3.13)-Corollary}
$$V(\psi) \simeq {\displaystyle{\bigotimes_{k \in \overline{\Gamma}}}}
V(\xi^k \Phi)$$ as $\gg_A$-modules.
\end{corollary}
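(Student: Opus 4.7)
The plan is to apply Proposition~\ref{(3.11)-Proposition} iteratively across the $R = |\overline{\Gamma}|$ summands in the decomposition of $\psi$. From the discussion immediately preceding the corollary, we have $\psi = \sum_{k \in \overline{\Gamma}} \xi^k \Phi$ as linear functionals on $\hh' \otimes A$, and each $\xi^k \Phi$ is an exp-polynomial map whose associated polynomials are $P_{1, k_1}, \ldots, P_{n, k_n}$; equivalently, $(\xi^k \Phi)(\hh' \otimes I_k) = 0$, where $I_k$ is the ideal of $A$ generated by $P_{1, k_1}, \ldots, P_{n, k_n}$. By Lemma~\ref{(3.5)-Lemma}, the ideals $\{I_k\}_{k \in \overline{\Gamma}}$ are pairwise co-prime.

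I would enumerate $\overline{\Gamma} = \{k^{(1)}, \ldots, k^{(R)}\}$ and set $\psi^{(j)} = \sum_{i=1}^{j} \xi^{k^{(i)}} \Phi$ together with $J^{(j)} = I_{k^{(1)}} \cap \cdots \cap I_{k^{(j)}}$. Then I proceed by induction on $j$: at the inductive step I invoke Proposition~\ref{(3.11)-Proposition} applied to $\psi_1 = \psi^{(j)}$ and $\psi_2 = \xi^{k^{(j+1)}} \Phi$ to obtain
$$V(\psi^{(j+1)}) \simeq V(\psi^{(j)}) \otimes V(\xi^{k^{(j+1)}} \Phi).$$
Iterating down to $j = R$ gives the desired isomorphism since $\psi^{(R)} = \psi$.

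The one nontrivial bookkeeping check is the hypothesis of Proposition~\ref{(3.11)-Proposition} at each stage: namely, that $J^{(j)}$ and $I_{k^{(j+1)}}$ are co-prime. For this I would note that $J^{(j)}$ contains the product $I_{k^{(1)}} \cdots I_{k^{(j)}}$, while pairwise co-primality from Lemma~\ref{(3.5)-Lemma} gives $I_{k^{(i)}} + I_{k^{(j+1)}} = A$ for each $i \leq j$; multiplying these identities yields $I_{k^{(1)}} \cdots I_{k^{(j)}} + I_{k^{(j+1)}} = A$, hence $J^{(j)} + I_{k^{(j+1)}} = A$. I would also verify that $\psi^{(j)}$ vanishes on $\hh' \otimes J^{(j)}$, which is immediate since each summand $\xi^{k^{(i)}} \Phi$ vanishes on $\hh' \otimes I_{k^{(i)}} \supseteq \hh' \otimes J^{(j)}$. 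Aside from these routine checks the argument is straightforward, so I do not anticipate a serious obstacle; the real content of the corollary is already contained in Proposition~\ref{(3.11)-Proposition}, and the role of Proposition~\ref{(3.5)-Proposition} is to guarantee that $\psi$ admits a decomposition into summands with the required co-prime structure.
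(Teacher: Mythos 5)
Your proof is correct and matches the approach the paper leaves implicit: after establishing the decomposition $\psi = \sum_{k\in\overline{\Gamma}} \xi^k\Phi$ and pairwise co-primality of the ideals $I_k$ (Lemma~\ref{(3.5)-Lemma}), the paper writes ``Hence, we have the following'' and states the corollary, the intended argument being exactly the iterated application of Proposition~\ref{(3.11)-Proposition}. Your explicit verification that $J^{(j)} + I_{k^{(j+1)}} = A$ via $\prod_{i\le j} I_{k^{(i)}} \subseteq J^{(j)}$ supplies the bookkeeping step the paper omits; the only tiny further check worth recording is that $J^{(j)}$ remains co-finite (immediate, since $A/J^{(j)}$ embeds in the finite-dimensional $\prod_{i\le j} A/I_{k^{(i)}}$), so the hypotheses of Proposition~\ref{(3.11)-Proposition} are fully met at each stage.
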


We also have the following statement which is of independent
interest.

\begin{lemma}\label{(3.14)-Lemma} $A_{\Phi} = \CC [t_1^{\pm 1}, \cdots,
t_n^{\pm 1}]$.
\end{lemma}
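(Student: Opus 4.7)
The plan is to compare $A_\Phi$ with $A_\psi$ by exploiting the averaging identity $\psi_h = P_R \Phi_h$. A direct computation gives $(P_R \exp\lambda)(m) = R\,\lambda^m$ for $m \in \Gamma$ and zero otherwise, so $\psi_h(m) = R\,\Phi_h(m)\mathbf{1}_{m \in \Gamma}$. Hence each generator $\overline\psi(h \otimes t^m)$ of $A_\psi$ is a scalar multiple of the corresponding generator $\overline\Phi(h \otimes t^m)$ of $A_\Phi$ (for $m \in \Gamma$), which yields $A_\psi \subseteq A_\Phi$. I would then view $A_\Phi$ as an $A_\psi$-submodule of the free $A_\psi$-module $A = \bigoplus_{[m] \in \overline\Gamma} A_\psi \cdot t^m$; the fact that $A_\Phi$ is a $\ZZ^n$-graded subring of $A$ forces $A_\Phi = \bigoplus_{[m] \in S} A_\psi \cdot t^m$ for a submonoid $S \subseteq \overline\Gamma$, and since $\overline\Gamma$ is finite this submonoid must actually be a subgroup. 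Thus $A_\Phi = \CC[t_1^{\pm s_1}, \ldots, t_n^{\pm s_n}]$ with $s_i \mid r_i$ for every $i$.

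Next I would argue by contradiction that each $s_i = 1$. Assume $s_1 > 1$ (the other coordinates are symmetric) and fix $j$ with $1 \leq j \leq s_1 - 1$. Since $\Phi_h(m) = 0$ whenever $m_1 \not\equiv 0 \pmod{s_1}$, expanding $\Phi_h = \sum_{\lambda \in B} p_{h,\lambda} \exp\lambda$ and substituting $m_1 = ks_1 + j$ gives
$$
\sum_{\lambda \in B} \lambda_1^{j}\, p_{h,\lambda}(ks_1+j, m_2, \ldots, m_n)\,(\lambda_1^{s_1})^k\,\lambda_2^{m_2} \cdots \lambda_n^{m_n} = 0
$$
identically in $(k, m_2, \ldots, m_n) \in \ZZ^n$ and $h \in \hh'$. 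The plan is to apply Lemma \ref{(3.3)-Lemma} to this exp-polynomial identity to isolate each $p_{h,\lambda}(ks_1+j, m_2, \ldots, m_n)$ as an identically vanishing polynomial, and then observe that the set $\{ks_1 + j : k \in \ZZ\} \times \ZZ^{n-1}$ is Zariski-dense in $\CC^n$, so the polynomial $p_{h,\lambda}$ must be identically zero. This collapses $\Phi$ to zero, contradicting the nontriviality of $\psi$.

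The main obstacle is verifying the hypothesis of Lemma \ref{(3.3)-Lemma}, namely that distinct $\lambda, \lambda' \in B$ give distinct frequency tuples $(\lambda_1^{s_1}, \lambda_2, \ldots, \lambda_n)$. If these tuples coincided, then $\lambda_i = \lambda_i'$ for $i \geq 2$ and $(\lambda_1/\lambda_1')^{s_1} = 1$; using the divisibility $s_1 \mid r_1$ established above, $\lambda_1/\lambda_1'$ would in particular be an $r_1$-th root of unity, putting $\lambda$ and $\lambda'$ in the same $\overline\Gamma$-orbit and contradicting the defining coset-representative property of $B$. This interplay between $s_1 \mid r_1$ and the choice of $B$ is the crux of the argument; applying the same reasoning to each coordinate then yields $s_i = 1$ for all $i$ and hence $A_\Phi = A$.
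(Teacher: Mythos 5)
Your proof is correct and takes essentially the same route as the paper's: reduce to $A_{\Phi}=\CC[t_1^{\pm s_1},\ldots,t_n^{\pm s_n}]$ with $s_i\mid r_i$ and then rule out $s_i>1$ by combining the linear independence of exponentials (Lemma \ref{(3.3)-Lemma}) with the divisibility $s_i\mid r_i$ and the fact that $B$ contains one representative per $\overline{\Gamma}$-orbit --- your residue-class computation with the regrouped frequencies $(\lambda_1^{s_1},\lambda_2,\ldots,\lambda_n)$ is precisely the mechanism the paper invokes by ``applying Proposition \ref{(3.5)-Proposition} to $\Phi_h$'' to conclude that $\xi_i'\lambda$ would have to occur alongside $\lambda$, which is impossible for $\lambda\in B$. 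The only soft spot, the passage from ``$\mathrm{Supp}\,\overline{\Phi}$ is a subgroup of $\ZZ^n$ containing $\Gamma$'' to the rectangular form $\bigoplus_i s_i\ZZ$, is asserted equally without justification in the paper (``Clearly we have\ldots''), so your argument matches the paper's step for step.
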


\begin{proof} Clearly we have
$$A_{\psi} \subseteq  A_{\Phi} = \CC [t_1^{\pm s_1}, \cdots, t_n^{\pm s_n}]$$
for some $s_i \mid r_i$.  Write
$$\Phi_h = {\displaystyle{\sum_{\lambda \in B}}} p_{h, \lambda} exp \lambda.$$
By Lemma 3.4 applied to $\Phi_h$, if $\lambda$ occurs in the summation
of $\Phi_h$, then $\xi_i' \lambda$ also occurs where $\xi_i'$ is the $s_i$th
root
of unity.  As $s_i \mid r_i,\ \xi_i'$ is also $r_i$ the root of unity.
That  is a
contradiction to the definition of $B$ if $s_i > 1$.  Thus $s_i =1$ and
the Lemma is proved.
\end{proof}

\section{Explicit description of  components of  $V(\psi)
\otimes A$}\label{section-4}
  In this section we describe explicitly the irreducible components of $V(\psi)
\otimes A$.  Recall that $V(\psi)\otimes A$ is
a $\widetilde{\gg}_A$-module (see  Section 1). 
 Also recall that for each $k \in \overline{\Gamma}$ there exists an automorphism $\eta_k$ of $V(\psi)$ of finite order by
 Corollary \ref{(2.4)-Corollary}(1).

For  $k \in \overline{\Gamma}$ define the map
$$
\widetilde{\eta}_k : V(\psi) \otimes A \to V(\psi) \otimes A$$ induced
 by $\eta_k$ as follows:
$$\widetilde{\eta}_k(v \otimes p(t)) = \eta_k(v) \otimes p(\xi^{-k} t),
$$
for all $v\in V(\psi)$ and $p(t)\in A$.

\begin{lemma}\label{(4.1)-Lemma} 
The map
$\widetilde{\eta}_k$ is an
automorphism of  $V(\psi) \otimes A$ as a $\widetilde{\gg}_A$-module. 
\end{lemma}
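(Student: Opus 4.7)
The plan is to verify directly that $\widetilde{\eta}_k$ commutes with the action of each of the three types of generators of $\widetilde{\gg}_A$: $X \otimes t^m$ for $X \in \gg'$, $h \in \hh''$, and $d_i \in D$. Everything follows from a careful bookkeeping of the scalar $\xi^{k\cdot m} := \xi_1^{k_1 m_1}\cdots\xi_n^{k_n m_n}$ that arises from $\eta_k$.

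First I would extract the intertwining property of $\eta_k$ from the proof of Proposition~\ref{(2.3)-Proposition}. Since $\sigma_i$ was defined as an algebra automorphism of $U(\gg_A)$ sending $X \otimes t^m$ to $\xi_i^{m_i}(X \otimes t^m)$ and fixing $\hh''$, and $\eta_k = \sigma_1^{k_1}\cdots\sigma_n^{k_n}$, the induced vector space automorphism of $V(\psi)$ satisfies
$$\eta_k\bigl((X \otimes t^m)\cdot v\bigr) = \xi^{k\cdot m}\,(X \otimes t^m)\cdot \eta_k(v), \qquad \eta_k(h v) = h\,\eta_k(v),$$
for $X \in \gg'$, $m \in \ZZ^n$, $h \in \hh''$, $v \in V(\psi)$. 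This is precisely the content of Corollary~\ref{(2.4)-Corollary} rephrased in terms of $\tau_k$.

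Second, it is enough to check the claim on pure tensors $v \otimes t^l$, where the formula simplifies to $\widetilde{\eta}_k(v \otimes t^l) = \xi^{-k\cdot l}\,\eta_k(v) \otimes t^l$. For $X \otimes t^m$ one then computes
$$(X \otimes t^m)\cdot \widetilde{\eta}_k(v \otimes t^l) = \xi^{-k\cdot l}\,\bigl((X \otimes t^m)\cdot \eta_k(v)\bigr)\otimes t^{m+l},$$
while
$$\widetilde{\eta}_k\bigl((X \otimes t^m)\cdot (v \otimes t^l)\bigr) = \xi^{-k\cdot(m+l)}\,\eta_k\bigl((X \otimes t^m)\cdot v\bigr)\otimes t^{m+l} = \xi^{-k\cdot l}\,(X \otimes t^m)\cdot \eta_k(v)\otimes t^{m+l},$$
the two factors $\xi^{-k\cdot(m+l)}$ and $\xi^{k\cdot m}$ collapsing to $\xi^{-k\cdot l}$. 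The check for $h \in \hh''$ is immediate from $\eta_k(hv) = h\eta_k(v)$, and for $d_i$ the scalar $l_i$ passes through without interacting with the $\xi^{-k\cdot l}$ factor.

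Finally, $\widetilde{\eta}_k$ is bijective because the map $v \otimes p(t) \mapsto \eta_k^{-1}(v) \otimes p(\xi^{k} t)$ is a two-sided inverse, both of which are $\widetilde{\gg}_A$-module maps by the same computation. There is no real obstacle in this lemma; the only thing one must be careful about is the twisting cocycle $\xi^{k\cdot m}$, whose compensation by the factor $\xi^{-k\cdot l}$ built into the definition of $\widetilde{\eta}_k$ is exactly what forces $\widetilde{\eta}_k$ to be an honest module automorphism rather than only a twisted one.
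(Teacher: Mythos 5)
Your proof is correct and follows essentially the same route as the paper: verify the intertwining relation directly on generators $X\otimes t^m$ by tracking the scalar $\xi^{k\cdot m}$ coming from $\eta_k$ against the scalar $\xi^{-k\cdot l}$ built into $\widetilde{\eta}_k$, and observe that the two cancel appropriately. The paper checks only the $X\otimes t^m$ case (leaving $\hh''$, $D$, and bijectivity implicit), whereas you spell out all three generator types and exhibit the explicit inverse $v\otimes p(t)\mapsto\eta_k^{-1}(v)\otimes p(\xi^k t)$, which is a harmless and mildly more careful rendering of the same argument.
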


\begin{proof}
 We need to check only that $\widetilde{\eta}_k$ is a $\widetilde{\gg}_A$-module homomorphism. 
 For $X \in \gg', m \in \ZZ^n, p(t) \in A$ and  $v \in
V(\psi)$ we have
$$\begin{array}{lll}
&& \widetilde{\eta}_k ((X \otimes t^m) (v \otimes p(t))) \\
& = & \widetilde{\eta}_k(((X \otimes t^m) v) \otimes t^m p(t)) \\
& = & \eta_k((X\otimes t^m) v) \otimes (\xi^{-k} t)^m p(\xi^{-k} t)\\
& = & (X \otimes (\xi^k)^m t^m)\eta_k(v) \otimes \xi^{-km} t^m
p(\xi^{-k} t) \\
& = & (X \otimes t^m)\eta_k(v) \otimes t^m p(\xi^{-k} t).
\end{array}$$
On the other hand, we have
$$\begin{array}{lll}
&& (X \otimes t^m) \widetilde{\eta}_k (v \otimes p(t)) \\
& = & (X \otimes t^m) (\eta_k(v) \otimes p(\xi^{-k} t)) \\
& = & (X \otimes t^m)\eta_k(v) \otimes t^m p(\xi^{-k} t)).
\end{array}
$$
We conclude that $\widetilde{\eta}_k$ is a $\widetilde{\gg}_A$-
module map, which completes the proof.  
\end{proof}

For any $k = (k_1, \cdots, k_n) \in \overline{\Gamma}$ set
$$(V(\psi) \otimes A)_k= \{ v \in V(\psi) \otimes A \mid \widetilde{\eta}_i v =
\xi_{i}^{k_i} v, 1\le i \le n\}.$$
 It follows immediately from  Lemma \ref{(4.1)-Lemma} that $(V(\psi)\otimes A)_k$ is a
$\widetilde{\gg}_A$-module.

\begin{theorem}\label{(4.2)-Theorem}  Let
$v$ be a highest weight vector of $V(\psi)$.  Then
$$U(\widetilde{\gg}_A)(v \otimes t^k) = (V \otimes A)_k, $$
for all $k \in
\overline{\Gamma}$.
\end{theorem}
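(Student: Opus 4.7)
The plan is to combine Proposition~\ref{(1.2)-Proposition} with joint diagonalization of the commuting family of module automorphisms $\widetilde{\eta}_{e_1},\ldots,\widetilde{\eta}_{e_n}$ on $V(\psi)\otimes A$. First I verify that $v\otimes t^k$ is a simultaneous eigenvector: since $v$ is the highest weight vector, the construction in the proof of Proposition~\ref{(2.3)-Proposition} fixes $\eta_{e_i}(v)=v$, so
$$\widetilde{\eta}_{e_i}(v \otimes t^k) = \eta_{e_i}(v) \otimes (\xi^{-e_i}t)^k = \xi_i^{-k_i}(v \otimes t^k).$$
By Lemma~\ref{(4.1)-Lemma} each $\widetilde{\eta}_{e_i}$ is a $\widetilde{\gg}_A$-module automorphism, so every joint eigenspace is a $\widetilde{\gg}_A$-submodule, giving the easy inclusion $U(\widetilde{\gg}_A)(v \otimes t^k) \subseteq (V(\psi)\otimes A)_k$ (after the natural identification of $k$ with its character of $\overline{\Gamma}$).

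For the reverse inclusion I use Proposition~\ref{(1.2)-Proposition} with the set of coset representatives $G = \{m \in \ZZ^n : 0 \le m_i < r_i\}$ for $A/A_{\psi}$, which has exactly $R = |\overline{\Gamma}|$ elements. This gives the irreducible decomposition
$$V(\psi) \otimes A = \bigoplus_{m \in G} U v(m).$$
The same eigenvalue computation applied to each $v(m)$ yields that $v(m)$ is a joint eigenvector with eigenvalues $\xi_i^{-m_i}$; since the $\widetilde{\eta}_{e_i}$ commute with the $\widetilde{\gg}_A$-action and $Uv(m)$ is generated by $v(m)$, the $\widetilde{\eta}_{e_i}$ act on all of $Uv(m)$ by the same scalars. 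For distinct $m \in G$ the tuples $(\xi_1^{-m_1},\ldots,\xi_n^{-m_n})$ are distinct, so the $R$ irreducibles $Uv(m)$ occupy $R$ pairwise distinct joint eigenspaces of the commuting finite-order family $\{\widetilde{\eta}_{e_i}\}$.

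Since the $\widetilde{\eta}_{e_i}$ have orders $r_i$ and pairwise commute, they are simultaneously diagonalizable on $V(\psi)\otimes A$ with at most $R$ nonzero joint eigenspaces (one per character of $\overline{\Gamma}$). Comparing cardinalities with the decomposition above, each joint eigenspace coincides with exactly one $Uv(m)$; the matching between $m \in G$ and $k \in \overline{\Gamma}$ then gives $(V(\psi)\otimes A)_k = Uv(k) = U(\widetilde{\gg}_A)(v\otimes t^k)$. The principal bookkeeping point -- reconciling the sign in $\widetilde{\eta}_k(v\otimes p(t))=\eta_k(v)\otimes p(\xi^{-k}t)$ with the indexing of $(V(\psi)\otimes A)_k$ -- is essentially cosmetic, and once it is settled the argument reduces to this counting-plus-simultaneous-diagonalization matching.
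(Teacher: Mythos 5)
Your proof is correct and follows essentially the same route as the paper: the easy inclusion $U(\widetilde{\gg}_A)(v\otimes t^k)\subseteq (V(\psi)\otimes A)_k$ from the eigenvector computation, combined with the decomposition $V(\psi)\otimes A=\bigoplus_m Uv(m)$ of Proposition~\ref{(1.2)-Proposition} and the directness of the sum of distinct joint eigenspaces, which you make explicit via simultaneous diagonalization where the paper leaves it implicit. The sign discrepancy you flag (the computed eigenvalue is $\xi_i^{-k_i}$, so $v\otimes t^k$ lands in the $-k$ eigenspace under the paper's stated convention) is real but, as you say, purely a matter of indexing and is silently absorbed in the paper's own proof as well.
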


\begin{proof} Note that $v \otimes 1 \in (V(\psi) \otimes A)_0$ and
$v \otimes t^k \in (V(\psi) \otimes A)_k$ for $k \in \overline{\Gamma}$.
Thus
$$U(\widetilde{\gg}_A)(v \otimes t^k) \subseteq (V(\psi) \otimes A)_k.$$
We also have 
$$\begin{array}{lll}
V(\psi) \otimes A & = & {\displaystyle{\bigoplus_{k \in \overline{\Gamma}}}}
U(\widetilde{\gg}_A)
v \otimes t^k \\
& \subseteq  &  {\displaystyle{\bigoplus_{k \in \overline{\Gamma}}}}
(V(\psi) \otimes A)_k,
\end{array}$$
by  Proposition \ref{(1.2)-Proposition}.
It implies immediately that
$$U(\widetilde{\gg}_A) (v \otimes t^k) = (V \otimes A)_k.$$
\end{proof}

\section{Characters}
One of the main problems in representation theory is to compute the characters of irreducible modules.  We will indicate some partial results in this direction for modules under consideration.

Let $\psi$ be an exp-polynomial map.  Following the previous section we write
$$\psi = {\displaystyle{\sum_{k \in \overline{\Gamma}}}} \xi^k \Phi,$$
where each $\xi^k \Phi$ in an exp-polynomial map. 

 For each $k \in \overline{\Gamma}$ we fix  an automorphism of finite order $\sigma_k$ of $\gg_A$, which exists by  Corollary \ref{(2.4)-Corollary}, (2). 
 Let
$$\gg_A \lr \ End \ (V (\xi^k \Phi))$$
be the representation map.  Then, clearly, 
$$p_{\xi^k} \circ \sigma_{\ell} = p_{\xi^{k+\ell}},$$
for all $k, \ell \in \overline{\Gamma}$.

Since $\sigma_{\ell}$ is $id$ on $\gg$, we obtain the following  isomorphism of $\gg$-modules:
$$V(\xi^k \Phi) \simeq V(\xi^{k+\ell}\Phi).$$
In particular,
$$V(\xi^k \Phi) \stackrel{\in_k}{\simeq} V(\Phi), \ \  \forall k \in
\overline{\Gamma}.$$
Set
$$W={\displaystyle{\bigotimes_{R-times}}} V(\Phi).$$ Then we have
the following isomorphisms of $\gg$-modules: 
$$V(\psi) \stackrel{\Omega}{\simeq} {\displaystyle{\bigotimes_{k \in
\overline{\Gamma}}}} \ V(\xi^k \Phi)\stackrel{\otimes \in_k}{\simeq} W.$$
Let $\gamma = \otimes \in_k \circ \Omega$.  It is easy to check that
$\gamma \circ  \eta_k \circ \gamma^{-1}: W \to W$ is a $\gg$-module map ($\eta_k$ is defined  in Corollary \ref{(2.4)-Corollary}, (1)).

Since
$$\xi^{\ell} \psi = {\displaystyle{\sum_{k \in\oG}}} \ \xi^{k+\ell} \Phi,$$
for any $\ell \in
\overline{\Gamma}$, we see that $\eta_{\ell}$ induces the isomorphism
$${\displaystyle{\bigotimes_{k \in \oG}}} V (\xi^k \Phi) \simeq
{\displaystyle{\bigotimes_{k \in \oG}}} V(\xi^{k+\ell} \Phi).$$ 
Hence,  $\eta_{\ell}$ and  $\gamma \circ \eta_{\ell} \circ
\gamma^{-1}$ 
permute the factors of the tensor product.  

\begin{proposition} Let ${\rm ch}\ V(\psi)$ and 
 ${\rm ch} \  V(\Phi)$ be the characters of   $V(\psi)$ and $V(\Phi)$, respectively. Then
$${\rm ch} \ V(\psi) = ({\rm ch}  \ V(\Phi))^R.$$
\end{proposition}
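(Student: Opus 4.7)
The plan is to simply harvest the tensor product decomposition already obtained in Corollary \ref{(3.13)-Corollary} together with the $\gg$-module isomorphisms between the factors that were just spelled out in the preceding paragraph, and then invoke the standard multiplicativity of characters under tensor product.

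First, I would recall that by Corollary \ref{(3.13)-Corollary} we have an isomorphism of $\gg_A$-modules
$$V(\psi)\;\simeq\;\bigotimes_{k\in\overline{\Gamma}} V(\xi^k\Phi).$$
Restricting the action to $\hh'\subseteq \gg_A$, this becomes an isomorphism of $\hh'$-modules, so the characters (taken with respect to $\hh'$, as in the discussion at the end of the Introduction) coincide.

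Next, I would use the key fact, established in the paragraph immediately preceding the proposition, that each automorphism $\sigma_\ell$ of $\gg_A$ restricts to the identity on $\gg$; consequently the twist of $V(\xi^k\Phi)$ by $\sigma_\ell$ gives a $\gg$-module isomorphism $V(\xi^k\Phi)\simeq V(\xi^{k+\ell}\Phi)$ and in particular an $\hh'$-module isomorphism. Letting $\ell$ range over $\overline{\Gamma}$, all $R$ factors have the same character as $V(\Phi)$, namely $\mathrm{ch}\,V(\Phi)$.

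Finally, I would apply the multiplicativity of characters under tensor product. This is where one has to be slightly careful: the formula $\mathrm{ch}(U\otimes W)=\mathrm{ch}(U)\cdot\mathrm{ch}(W)$ requires the weight spaces of each factor to be finite dimensional, so the weight space of a given weight in $U\otimes W$ is a finite sum of products of finite-dimensional weight spaces. This finite-dimensionality is guaranteed for $V(\Phi)$ by Lemma \ref{lemmas 1.4,1.5,1.6,1.7}(3), since $\Phi$, being the building block of the exp-polynomial map $\psi$, is itself an exp-polynomial map and hence factors through $\hh'\otimes A/I_{r_1\cdots r_n}$ for a co-finite ideal. Combining the three ingredients,
$$\mathrm{ch}\,V(\psi)\;=\;\prod_{k\in\overline{\Gamma}}\mathrm{ch}\,V(\xi^k\Phi)\;=\;\bigl(\mathrm{ch}\,V(\Phi)\bigr)^{R},$$
since $|\overline{\Gamma}|=R$. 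The main (only mild) obstacle is the bookkeeping needed to justify the character multiplicativity, i.e.\ verifying that the weight spaces of the tensor factors are finite dimensional so the product of formal characters makes sense termwise; everything else is a direct consequence of results already proved.
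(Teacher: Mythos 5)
Your proposal is correct and follows essentially the same route as the paper: the paper's one-line proof simply observes that $V(\psi)\simeq\bigotimes_{R\text{-times}}V(\Phi)$ as $\gg$-modules (hence as $\hh$-modules), which is exactly the chain you spell out more slowly — Corollary \ref{(3.13)-Corollary} plus the fact that each $\sigma_\ell$ is the identity on $\gg$ so all the factors $V(\xi^k\Phi)$ are isomorphic to $V(\Phi)$ as $\gg$-modules. Your extra care about finite-dimensionality of weight spaces is a reasonable clarification but not a new idea; the paper implicitly relies on the standing hypothesis that $\psi$ (and hence $\Phi$) is exp-polynomial, which guarantees this via Lemma \ref{lemmas 1.4,1.5,1.6,1.7}(3).
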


\begin{proof} Note that we have the following isomorphism of $\gg$-modules:
$$V(\psi) \simeq  {\displaystyle{\bigotimes
_{R-times }}} V(\Phi).$$  In particular, this is an isomorphism of $\hh$-modules. Hence, 
their characters are same and the statement follows.
\end{proof}
\noindent
{\bf Character of $(V(\psi)\otimes A)_k$ :} Now we will indicate how to compute the character of
$(V(\psi)\otimes A)_k$.  Since all the components are isomorphic upto a grade shift,
it is sufficient to compute the character of $(V(\psi)\otimes A)_0$.
Recall that
$$(V(\psi) \otimes A)_0= \{v \in V(\psi) \otimes A_0 \mid \widetilde{\eta}_k
v = v, \  \forall k \in \oG\}.$$
Let $k = (k_1, \cdots, k_n)\in \oG$ and $\eta_k
= \sigma_1^{k_1} \cdots \sigma_n^{k_n}$. Hence, it is sufficient
to describe fixed points of all $\widetilde{\sigma}_{\ell}$. Set
$$V(\psi)_{(k_1,\cdots, k_n)} = \{v \in V(\psi) \mid \sigma_i v =
\xi_{i}^{k_i}v, 1 \le i \le n\}.$$
Then we have
$$(V(\psi) \otimes A)_0 = {\displaystyle{\bigoplus_{k = (k_1, \cdots, k_n)\in
\ZZ^n}}} \ V(\psi)_{(k_1, \cdots, k_n)}\otimes t^k.$$
Therefore, to compute the character of $(V(\psi) \otimes A)_0$ it is sufficient  to
compute the characters of $V(\psi)_{(k_1, \cdots, k_n)}, k\in \oG$.

  Let $V$ be an
$\NN^d$-graded vector space for some positive integer $d$.  Here $\NN$
denotes the non-negative integers and $\NN^d$ denotes $d$ copies of $\NN$.

Let $V = {\displaystyle{\bigoplus_{\alpha \in \NN^d}}} V (\alpha)$, a direct sum of graded components.  We assume
that each $dim V({\alpha})$ is finite.  Fix a positive integer $r$. Consider
$V^r = V \otimes \cdots, \otimes V$ ($r$ times).  
Then $V^r  = {\displaystyle{\bigoplus_{\alpha \in \NN^d}}}
V^r(\alpha)$, and
$$V^r(\alpha) = {\displaystyle{\bigoplus_{\sum \alpha_i = \alpha}}}
\ V(\alpha_1) \otimes \cdots \otimes V(\alpha_r).$$
Clearly,  ${\rm dim}V^r (\alpha) < \infty$.

Define $\sigma_r : V^r \to V^r$ as follows:
$$\sigma_r (v_1 \otimes \cdots v_r) = v_r \otimes v_1 \otimes
\cdots \otimes v_{r-1},$$
for all $v_1, \ldots, v_r\in V$.

We have that $\sigma_r (V^r (\alpha)) = V^r (\alpha)$, 
for any $\alpha \in \NN^d$. For each $k\in \mathbb Z$
denote
$$V^r_k = \{v \in  V^r\mid \sigma_{r} (v) = \xi^k v \},$$
where $\xi$ is a primitive $r$-th root of unity.  Then 
$$V^r = {\displaystyle{\bigoplus_{k \in \ZZ/r\ZZ}}} V^r_k,$$
where
$$V^r_k = {\displaystyle{\bigoplus_{\alpha \in \NN^d}}} V_k^r(\alpha).$$
For any $\NN^d$-graded vector space $U = {\displaystyle{\bigoplus_{\alpha
\in \NN^d}}} U(\alpha)$ define
$$P_U(X) \ =  {\displaystyle{\sum_{\alpha \in \NN^d}}} \dim \ U(\alpha)  X^{\alpha}
\in \NN[[X_1, \cdots X_d]]$$
where $X^{\alpha} = X^{\alpha_1}_1  \cdots X_d^{\alpha_d}$.

Set
$$C_r(n) = {\displaystyle{\sum_{t \mid gcd (r,n)}}} t \mu (r/t),$$
where $\mu$ is the Mobius function.  

We now recall the following formula
from  \cite{W}.  It is proved in \cite{W} only for $d=1$ but the same proof holds  for
 any  $d$.

\begin{theorem}\label{(5.3)-Theorem} [ \cite{W},Theorem 5.5]
$$P_{V^r_k} (X) = \frac{1}{r} {\displaystyle{\sum_{t\mid r}}} C_t (r) (P_V
(X^t))^{\frac{r}{t}}.$$
\end{theorem}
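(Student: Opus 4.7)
The plan is to realize $V^r_k$ as the $\xi^k$-eigenspace of the order-$r$ operator $\sigma_r$ on $V^r$ using the standard averaging projector for the $\ZZ/r\ZZ$-action it generates. Since $\sigma_r$ preserves each graded piece $V^r(\alpha)$, for every $\alpha\in\NN^d$ one has
$$\dim V^r_k(\alpha)=\frac{1}{r}\sum_{j=0}^{r-1}\xi^{-jk}\,\mathrm{tr}\bigl(\sigma_r^j\big|_{V^r(\alpha)}\bigr),$$
and summing over $\alpha$ gives
$$P_{V^r_k}(X)=\frac{1}{r}\sum_{j=0}^{r-1}\xi^{-jk}\,T_j(X),\qquad T_j(X):=\sum_{\alpha\in\NN^d}\mathrm{tr}\bigl(\sigma_r^j\big|_{V^r(\alpha)}\bigr)X^{\alpha}.$$
So the proof reduces to computing the graded trace $T_j$ and then recognising the arithmetic function that appears after Möbius-style regrouping.

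For $T_j(X)$, I would fix a homogeneous basis $\{e_i\}_{i\in I}$ of $V$ with $e_i$ in degree $\alpha_i\in\NN^d$, and consider the induced monomial basis $e_{i_1}\otimes\cdots\otimes e_{i_r}$ of $V^r$. The shift $\sigma_r^j$ permutes the tensor factors cyclically, so it contributes to the trace exactly when the index sequence $(i_1,\ldots,i_r)$ is periodic of period $d=\gcd(r,j)$. Such a fixed basis vector has degree $(r/d)(\alpha_{i_1}+\cdots+\alpha_{i_d})$, and its $X$-monomial factors over the period. Summing over periods $(i_1,\ldots,i_d)\in I^d$ yields
$$T_j(X)=\bigl(P_V(X^{r/d})\bigr)^d.$$

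The last step is to regroup the sum over $j\in\{0,\ldots,r-1\}$ by the common value of $d=\gcd(r,j)$ and to set $s=r/d$. Writing $j=(r/s)j'$ with $j'\in\{0,1,\ldots,s-1\}$ and $\gcd(s,j')=1$, and noting that $\xi^{r/s}$ is a primitive $s$-th root of unity, the inner exponential sum becomes the classical Ramanujan sum
$$\sum_{\substack{0\le j'<s\\ \gcd(s,j')=1}}\bigl(\xi^{r/s}\bigr)^{-j'k}=\sum_{t\mid\gcd(s,k)}t\,\mu(s/t)=C_s(k),$$
by standard Möbius inversion. Plugging back into the averaged sum, with the summation index $t=s$ running over the divisors of $r$, gives the claimed identity.

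The only genuinely substantive step is the identification of the inner exponential sum with $C_s(k)$; this is the one place where the specific definition of $C_\bullet(\bullet)$ enters, and it is the classical Ramanujan-sum evaluation. Every other step is formal: the averaging projector extracts the $\xi^k$-eigenspace, the trace of a cyclic shift on a tensor power is a standard counting of periodic words, and the passage from the scalar grading treated in \cite{W} to an $\NN^d$-grading is immaterial because the weights add and the monomials $X^{\alpha_1+\cdots+\alpha_r}$ factor componentwise. Thus the argument of \cite{W} extends verbatim to general $d$.
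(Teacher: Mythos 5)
Your proof is correct, and it is the standard argument: extract the $\xi^k$-eigenspace with the averaging projector, evaluate the graded trace of $\sigma_r^{\,j}$ by counting shift-invariant index sequences, and regroup by $\gcd(r,j)$ into Ramanujan sums. (The paper gives no proof at all here --- it only cites \cite{W} and asserts the argument extends to $d>1$ --- so there is no alternative route to compare against; yours is presumably the intended one.) There is, however, one substantive point you must not gloss over. What your computation actually yields is
$$P_{V^r_k}(X)=\frac{1}{r}\sum_{t\mid r}C_t(k)\,\bigl(P_V(X^t)\bigr)^{r/t},$$
with coefficient $C_t(k)$, whereas the statement as printed has $C_t(r)$. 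These are not the same: for $t\mid r$ one has $\gcd(t,r)=t$, hence $C_t(r)=\sum_{s\mid t}s\,\mu(t/s)=\phi(t)$, so the printed right-hand side is independent of $k$. That is already false for $\dim V=1$ and $r=2$: there $V^2_0=V^2$ is one-dimensional and $V^2_1=0$, while the printed formula assigns each eigenspace the full Poincar\'e series. Your version is the correct one (and is what Wilson's Theorem 5.5 actually asserts); the $C_t(r)$ in the displayed statement, and again in the two subsequent applications of the theorem in Section 5, is a typo for $C_t(k)$, $C_t(k_1)$, $C_t(k_2)$ respectively. Your closing sentence, which says your expression ``gives the claimed identity,'' silently identifies the two and should instead flag the discrepancy. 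Two cosmetic remarks: you reuse $d$ both for the rank of the grading $\NN^d$ and for $\gcd(r,j)$, which should be disentangled; and ``periodic of period $d$'' should read ``invariant under the shift by $j$, equivalently of period dividing $d$,'' though your enumeration over $I^d$ is the correct count.
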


We will now indicate how to compute the character for $(V(\psi)\otimes
A)_0$ for the case $n = 2$ (hence, $R=r_1 r_2 $).  General case can be treated similarily but it requires 
more complicated  notation.

Recall that $V(\psi)$ is a highest weight module for $\gg_A$ and $V(\psi) \simeq {\displaystyle{\bigotimes_{R-times}}}
V(\Phi)$.  Then $V(\Phi)$ is a highest weight
module for $\gg_A$.  Let $d$ be the rank of the Kac-Moody Lie
algebra  $\gg$.  Let $\alpha_1, \cdots, \alpha_d$ be the simple
roots of $\gg$,  $Q^+$ be the set consisting of all non-negative linear combinations of roots
$\alpha_1, \ldots, \alpha_d$.  Then $Q^+$ can be identified with $\NN^d$. We have 
that
$$V(\Phi)= {\displaystyle{\bigoplus_{\alpha \in \NN^d}}} \ V(\Phi)_{\lambda-
\alpha},$$
where $ \Phi\mid_\hh = \lambda.$

Denote $V = V(\Phi)$ and $V(\alpha) = V(\Phi)_{\lambda-\alpha}$.

Let
$$\begin{array}{lll}
W & = & V \otimes \cdots, \otimes V \ {(r_1 \ {\rm times}}) \ {\rm and}\\
V(\psi) & = & W \otimes \cdots, \otimes W \ (r_2 \ {\rm times}).
\end{array}
$$
Note that each space $V(\xi^k \Phi)$ is identified with $V$ and $\sigma_{1},\sigma_{2}$
are permutations on $V(\psi)$.
Now it is easy to see that the sum $\sum \xi^k \Phi$ can
be rearranged in such way that $\sigma_{1}$ leaves each $W$ invariant and 
$$\sigma_1 (v_1 \otimes \cdots, \otimes v_{r_1}) = v_{r_1} \otimes v_1 \otimes
\cdots, \otimes v_{r_1-1},$$  
for all
$v_i \in V$.  This  extends  to $V(\psi)$  leaving each component
$W$ invariant.  Further we can assume
$$
\begin{array}{lll}
&& \sigma_2(w_1 \otimes \cdots, \otimes \cdots, w_{r_2}) \\
& = & w_{r_2} \otimes w_1 \otimes \cdots, \otimes w_{r_2-1}, w_i \in W.
\end{array}
$$

Since
$$V^{r_1}_{k_1} = W_{k_1} = \{v \in V^{r_1} \mid \sigma_1 (v) = \xi_1^{k_1}
v\},$$
where $\xi_1$ is a $r_i$-th	 primitive root of unity, then we have
\begin{equation}\label{eq-5.4}
P_{W_{k_1}}(X) = \frac{1}{r_1} {\displaystyle{\sum_{t\mid
r_1}}} C_t(r_1) (P_V(X^t))^{r_1/t},
\end{equation}
by  Theorem \ref{(5.3)-Theorem}.

Now we have that $W = {\displaystyle{\bigoplus_{(\alpha, s) \in \NN^{d+1}}}}
W(\alpha)_s$, where $W(\alpha)_s$ is the $s$'s eigenspace of
$\sigma_1$.  Note that $W$ is graded by $\NN^{d+1}$.  Again by
  Theorem \ref{(5.3)-Theorem} we have:

\begin{equation}\label{eq-5.5}
P_{W_{k_2}^{r_2}}(X) = P_{V(\psi)_{k_2}}(X) = \frac{1}{r_2}
{\displaystyle{\sum_{t\mid r_2}}} C_t(r_2) P_W(X^t)^{\frac{r_2}{t}}.
\end{equation}

Formula \eqref{eq-5.5} implies that
$\dim  V(\psi)_{(k_1, k_2)} (\alpha)$ can be computed in terms of
$\dim \ W_{k_1}(\alpha)$. On the other hand, 
$\dim  W_{k_1}(\alpha)$ can be
computed in terms of $\dim V(\alpha)$ by \eqref{eq-5.4}.  Hence, if we know
 the character
of $V(\Phi)$ then we can compute the character of any $V(\psi)_{(k_1,{k_2})}$.
This in turn allow us to compute the character of $(V(\psi)\otimes A)_0$.

We refer to \cite{G1} and \cite{G2} for character formulas for 
graded integrable modules.

\section{Acknowledgment}
\noindent The second author is supported in part by the CNPq grant
(processo 301743/2007-0) and by the Fapesp grant (processo
2005/60337-2).


\begin{thebibliography}{2000}
\bibitem[BGLZ]{BGLZ}
P. Batra, X. Guo, R. Lu and K. Zhao, Highest weight modules over pre-exp-polynomial
Lie algebras, {\it Journal of
Algebra}, {\bf 322}, 4163--4180 (2009). 
\bibitem[BZ]{BZ} Y. Billig
and K. Zhao, Weight modules over exp-polynomial Lie algebras, {\it Journal of Pure and Applied Algebra}, {\bf
191}, 23--42 (2004).
\bibitem[CP]{CP} V. Chari and A.N. Pressley, New unitary
Representations of loop groups, {\it Math. Ann.}, {\bf 275},
87--104 (1986). 
\bibitem[E1]{E1} S.Eswara Rao, Iterated loop
modules and a filteration for vertex representation of toroidal
Lie-algebras, {\it Pacific Journal of Mathematics,} {\bf 171} (2),
511-528 (1995). 
\bibitem[E2]{E2} S. Eswara Rao, Classification of
irreducible integrable modules for toroidal Lie Algebras with
finite dimensional weight spaces, {\it Journal of Algebra}, {\bf
277}, 318--348 (2004). 
\bibitem[E3]{E3} S. Eswara Rao, Irreducible
Representations for toroidal Lie-algebras, {\it Journal of Pure
and applied algebra}, {\bf 202}, 102--117 (2005). 
\bibitem[G1]{G1} J. Greenstein, Characters of
Bounded $\widehat{s\ell}_2$, modules, {\it Journal of Algebra},
{\bf 230}, 540--557 (2000). 
\bibitem[G2]{G2} J. Greenstein,
Characters of simple bounded modules over an untwisted Affine
Lie-algebras, {\it Algebras and Representations Theory}, {\bf 6},
119-137 (2003). 
\bibitem [K]{K} V. Kac, Infinite dimensional Lie
algebras, 3rd edition, {\it Cambridge University Press}, (1990).
\bibitem[PB]{PB} T. Pal and P. Batra, Representations of
Graded multiloop Lie algebras, {\it Communications in Algebra},
{\bf 38} (1), 49--67 (2010). 
\bibitem[RZ]{RZ} R. Lu and K.Zhao, Verma modules over quantum Torus Lie-algebras, {\it Canadian
Journal of Mathematics,}{\bf 62}(2), 382-399 (2010). 
\bibitem[W]{W} B. Wilson, A Character formula for the Category
${\widetilde{\mathcal O}}$  of modules for affine $s{\ell}_2$,
{\it Int. Math. Re. Not, IMRN}, {\bf 2008}, Art. Id. rnn 092,
29pp. 

\end{thebibliography}
\end{document}